\theoremstyle{plain}
\newtheorem{theorem}{\bf Theorem}[section]
\newtheorem{proposition}[theorem]{\bf Proposition}
\newtheorem{corollary}[theorem]{\bf Corollary}
\newtheorem{problem}[theorem]{\bf Open Problem}
\theoremstyle{definition}
\newtheorem{example}[theorem]{Example}
\theoremstyle{remark}
\newtheorem{remark}[theorem]{Remark}
\numberwithin{equation}{section}
\newcommand{\xx}{\mathbf{x}}
\newcommand{\QQ}{\mathbb{Q}}
\newcommand{\NN}{\mathbb{N}}
\newcommand{\PP}{\mathbb{P}}
\newcommand{\CC}{\mathbb{C}}
\newcommand{\ZZ}{\mathbb{Z}}
\newcommand{\Q}{\mathcal{Q}}
\newcommand{\sym}{\mathfrak{S}}
\newcommand{\T}{\mathcal{T}}
\newcommand{\ndrd}{\textsf{NDRD}}
\newcommand{\ndnl}{\textsf{NDNL}}
\newcommand{\naas}{\textsf{NAAS}}
\newcommand{\ntns}{\textsf{NTNS}}
\newcommand{\clr}{\mathbf{color}}
\newcommand{\nor}{\textsf{Nor}}
\newcommand{\lie}{\mathcal{L}ie}
\newcommand{\comb}{\textsf{Comb}}
\newcommand{\lyn}{\textsf{Lyn}}
\newcommand{\mcomb}{\textsf{MComb}}
\def\newop#1{\expandafter\def\csname #1\endcsname{\mathop{\rm #1}\nolimits}}
\title[]{A note on the $\gamma$-coefficients of the ``tree Eulerian polynomial"}
\author[R. S. Gonz\'alez D'Le\'on]{Rafael S. Gonz\'alez D'Le\'on$^*$}
\address{Department of Mathematics, University of Kentucky, Lexington, KY 40506-0027}
\email{rafaeldleon@uky.edu}
\thanks{$^*$This work was supported by NSF Grant  DMS 1202755}
\begin{document}
\begin{abstract}
We consider the generating polynomial of the number of rooted trees on the set 
$\{1,2,\dots,n\}$ counted by the number of descending edges (a parent with a greater label than a 
child). This polynomial is an extension of the 
descent generating polynomial of the set of permutations of a totally ordered $n$-set, known as 
the Eulerian polynomial. We show how this extension shares some of the properties of the 
classical one. B. Drake proved that this polynomial factors completely over the integers.
From his product formula it can be concluded that this polynomial has positive coefficients in 
the $\gamma$-basis and we show that a formula for these coefficients can also be derived. 
We discuss various combinatorial interpretations of these positive 
coefficients in terms of leaf-labeled binary trees and in terms of the 
Stirling permutations introduced by Gessel and Stanley. These interpretations are derived from 
previous results of the author and Wachs related to the poset of weighted 
partitions and the free multibracketed Lie algebra.
\end{abstract}

\maketitle
\section{introduction}\label{section:introduction}

A \emph{labeled rooted tree} $T$ on the set $[n]:=\{1,2,\cdots,n\}$ is a tree whose nodes or 
vertices are the elements of $[n]$ and such that one of its nodes has been distinguished and called 
the \emph{root}. For nodes $x$ and $y$ in $T$ we say that \emph{$x$ is the child of $y$} or 
\emph{$y$ is the parent of $x$} if $y$ is the first node following $x$ in the unique path from $x$ 
to 
the root of $T$ and we say that $y=\p(x)$. Nodes that have children are said to be \emph{internal} 
otherwise we call a node
without children a \emph{leaf}. If 
$y$ is the parent of $x$, we say that the edge $\{x,y\}$ of $T$ is \emph{descending} (and we call 
$x$ a \emph{descent} of $T$) if the label of $y$ is greater than the label of $x$. We denote 
$\des(T)$ the number of descents in $T$. Figure 
\ref{figure:rooted_trees_n3} shows all the rooted trees on $[3]$ grouped by the number of 
descents. We 
draw the trees with the convention that parents come higher than their children and the root is the 
highest node. We denote $\T_n$ the set of rooted trees on $[n]$ and 
$\T_{n,i}$ the set of trees in $\T_n$ with exactly $i$ descents. 

For a given $n \ge 1$ define

\begin{align}\label{equation:generating_polynomial}
 T_n(t):=\sum_{T \in \T_n} t^{\des(T)}=\sum_{i = 0}^{n-1} 
|\T_{n,i}| t^i,
\end{align}

the descent generating polynomial of $\T_n$. We call $T_n(t)$ the \emph{tree Eulerian 
polynomial}
in analogy with the classical polynomial $A_n(t)=\sum_{\sigma \in 
\sym_n}t^{\des(\sigma)}$, that is the descent generating polynomial of the set $\sym_n$ 
of permutations of $[n]$. We can identify the permutations in $\sym_n$ with the set of rooted 
trees on $[n]$ that have $n-1$ internal nodes, each of them having a unique child (and so 
containing a unique leaf). It is not hard to see that our definition of a descent on this set 
of trees coincides with the classical definition of descent in a permutation so the 
polynomial $T_n(t)$ is an extension of the polynomial $A_n(t)$. The polynomials 
$A_n(t)$ have been extensively studied in the literature and are known with the name of 
\emph{Eulerian polynomials} since Euler was one of the first in studying them (see 
\cite{Stanley2012}). The Eulerian polynomial  $A_n(t)=\sum_{k=0}^{n-1} A_{n,i}t^{i}$ have degree 
$n-1$ and its coefficients satisfy the relation

\begin{align}\label{equation:symmetry}
A_{n,i}=A_{n,n-1-i}. 
\end{align}

For example, the Eulerian polynomial for $n=3$ is $A_3(t)=1+4t+t^2$. A polynomial that 
satisfies Equation \ref{equation:symmetry} is called \emph{symmetric} or \emph{palindromic}.
It is a simple observation that a symmetric polynomial 
$f(t)=\sum_{k=0}^{d} f_{i}t^{i}$ of degree $d$ with $f_i \in \ZZ$ can be written in the form
\begin{align}
 \sum_{i = 0}^{d} 
f_{i} t^i=\sum_{i=0}^{\lfloor \frac{d}{2} \rfloor} \gamma _i t^i(1+t)^{d-2i},
\end{align}

where the coefficients $\gamma _i \in \ZZ$, i.e., the set $\{t^i(1+t)^{d-2i}\}_{i=0}^{\lfloor 
\frac{d}{2}\rfloor}$, where $\lfloor \cdot \rfloor$ is the integer floor function, is a basis 
(known as the \emph{$\gamma$-basis}) for the space of symmetric 
polynomials of degree $d$ with integer coefficients.  If $\gamma_i \ge 0$ then 
we say that the polynomial 
$f(t)$ is \emph{$\gamma$-positive}. It is known that $A_n(t)$ is $\gamma$-positive and that its 
coefficients $\gamma_i$ have a nice combinatorial interpretation. Indeed, let $\widehat \sym_n$ be 
the set of 
permutations in $\sym_n$ that have no two adjacent descents and no descent in 
the last position. In \cite{ShapiroWoanGetu1983}, Shapiro, Woan and Getu show that
$$\gamma_j=\{\sigma \in \widehat \sym_n\,\mid\, \des(\sigma)=j\}.$$
For example, $A_3(t)=(1+t)^2+2t$ with $\gamma_0=1$ and $\gamma_1=2$. The permutations in 
$\widehat \sym_3$ are, $123$ with no descents and; $213$ and $312$ with one descent.
Gal \cite{Gal2005} and Br\"and\'en \cite{Branden2004,Branden2008} have introduced the use of the 
$\gamma$-basis in different contexts. Gal 
conjectured that the $\gamma$-coefficients of the $h$-polynomial of a flag 
simple polytope are all nonnegative. In particular, $A_n(t)$ is the $h$-vector of the permutahedron 
that is a flag 
simple polytope so Gal's conjecture is confirmed in this 
case. Postnikov, Reiner and Williams \cite{PostnikovReinerWilliams2008} have confirmed Gal's 
conjecture for the family of chordal nestohedra that is a large family of flag simple polytopes. 
For more information about $\gamma$-positivity see \cite{Branden2014}.

We will show that the properties discussed above for the Eulerian polynomial $A_n(t)$ are also 
shared by the polynomial $T_n(t)$ in a similar fashion. The degree of $T_n(t)$ is also $n-1$ and it 
is 
easy to see from the definition 
of a descent that 
\begin{align}
|\T_{n,i}|=|\T_{n,n-1-i}|,
\end{align}
so $T_n(t)$ is also symmetric. 
Indeed there is a natural bijection $\T_{n,i} \simeq 
\T_{n,n-1-i}$ where the 
image of a labeled rooted tree $T \in \T_{n,i}$, is the tree in $\T_{n,n-1-i}$ with the same shape 
of $T$ but where each label $i$ has been replaced by $n+1-i$. For the example in Figure 
\ref{figure:rooted_trees_n3}, $T_3(t)=2+5t+2t^2$.

\begin{figure}[h]
 \begin{tikzpicture}[line join=bevel,scale=0.9]
\tikzstyle{every node}=[circle, draw,inner sep=1pt, minimum width=14pt,scale=0.8]
	\draw (2,5)  node (v3){1};
    \draw (1,4) node (v2){2};
    \draw (3,4) node (v1){3};
    \draw [color=blue,very thick] (v3) --  (v2) ;
    \draw [color=blue,very thick] (v3) --  (v1) ;

  \draw (2,3)  node (v3){1};
    \draw (2,2) node (v2){2};
    \draw (2,1) node (v1){3};
    \draw [color=blue,very thick] (v3) --  (v2) ;
    \draw [color=blue,very thick] (v2) --  (v1) ;

 \draw (7,5)  node (v3){2};
    \draw (6,4) node (v2){3};
    \draw (8,4) node (v1){1};
    \draw [color=blue,very thick] (v3) --  (v2) ;
    \draw [color=red,very thick] (v3) --  (v1) ;

  \draw (5.5,3)  node (v3){2};
    \draw (5.5,2) node (v2){1};
    \draw (5.5,1) node (v1){3};
    \draw [color=red,very thick] (v3) --  (v2) ;
    \draw [color=blue,very thick] (v2) --  (v1) ;

\draw (6.5,3)  node (v3){3};
    \draw (6.5,2) node (v2){1};
    \draw (6.5,1) node (v1){2};
    \draw [color=red,very thick] (v3) --  (v2) ;
    \draw [color=blue,very thick] (v2) --  (v1) ;

 \draw (7.5,3)  node (v3){1};
    \draw (7.5,2) node (v2){3};
    \draw (7.5,1) node (v1){2};
    \draw [color=blue,very thick] (v3) --  (v2) ;
    \draw [color=red,very thick] (v2) --  (v1) ;

 \draw (8.5,3)  node (v3){2};
    \draw (8.5,2) node (v2){3};
    \draw (8.5,1) node (v1){1};
    \draw [color=blue,very thick] (v3) --  (v2) ;
    \draw [color=red,very thick] (v2) --  (v1) ;
 
  \draw (12,5)  node (v3){3};
    \draw (11,4) node (v2){2};
    \draw (13,4) node (v1){1};
    \draw [color=red,very thick] (v3) --  (v2) ;
    \draw [color=red,very thick] (v3) --  (v1) ;

  \draw (12,3)  node (v3){3};
    \draw (12,2) node (v2){2};
    \draw (12,1) node (v1){1};
    \draw [color=red,very thick] (v3) --  (v2) ;
    \draw [color=red,very thick] (v2) --  (v1) ;
\tikzstyle{every node}=[inner sep=1pt, minimum width=14pt,scale=0.8]

\draw (2,0) node {$\des(T)=0$};
\draw (7,0) node {$\des(T)=1$};
\draw (12,0) node {$\des(T)=2$};

\end{tikzpicture}
\caption{All labeled rooted trees on $[3]$}
\label{figure:rooted_trees_n3}
\end{figure}
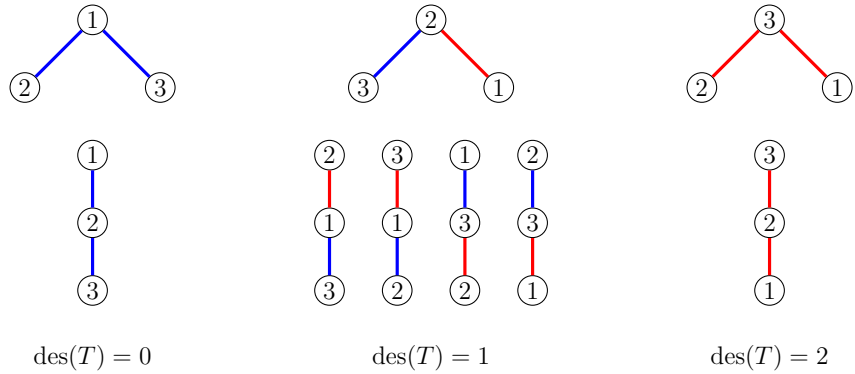 

In \cite{Drake2008} Drake proves the following nice product formula for $T_n(t)$.

\begin{theorem}[{\cite[Example 1.7.2]{Drake2008}}]
For $n \ge 1,$
\begin{align}  \label{equation:drake} \sum_{i = 0}^{n-1} |\T_{n,i}| t^i =
\prod_{i=1}^{n-1}((n-i)+it).
\end{align}
\end{theorem}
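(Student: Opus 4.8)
The plan is to prove the polynomial identity directly, since the statement is equivalent to the single identity $\sum_{i}|\T_{n,i}|t^i=\prod_{i=1}^{n-1}((n-i)+it)$ for every fixed $n$. A useful first move is the substitution $u=1-t$, under which each factor becomes $(n-i)+it=n-iu$, so the right-hand side is the ``deformed falling factorial'' $\prod_{i=1}^{n-1}(n-iu)$. This form is both suggestive and a source of sanity checks: at $u=1$ (that is $t=0$) it equals $(n-1)!$, matching that $|\T_{n,0}|$ is the number of increasing trees on $[n]$, while at $u=0$ (that is $t=1$) it equals $n^{n-1}$, matching Cayley's count $|\T_n|$. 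I would set $y=y(x,t)=\sum_{n\ge1}T_n(t)\frac{x^n}{n!}$ and aim to identify $y$ with (a close relative of) a generalized tree function, so that extracting the coefficient of $x^n$ reproduces $\prod_{i=1}^{n-1}(n-iu)$.

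The generating function should be assembled from the recursive structure of rooted trees: deleting the root $r$ of $T\in\T_n$ leaves an unordered collection of rooted subtrees $T_1,\dots,T_m$ whose roots are the children $c_1,\dots,c_m$ of $r$, and one checks directly that $\des(T)=\sum_{j}\des(T_j)+\#\{\,j:\ c_j<r\,\}$, because the edge from $c_j$ to $r$ is descending exactly when $r>c_j$. This is the heart of the matter, and also the main obstacle: the term $\#\{j:c_j<r\}$ compares the label of the root with the labels of the subtree-roots, a \emph{global} order condition that does not factor through the usual product/exponential rules for labeled structures. In particular a naive ``build the tree one node at a time'' argument fails. For instance, if one grows the tree by inserting labels in decreasing order and lets each new (currently smallest) node adopt an arbitrary subset of the current roots as children, the per-step weight depends on the number of components of the current forest (one finds, for example, a factor $2+2t$ from some intermediate forests and $4+4t$ from others when $n=4$), so the step contributions do not telescope into the product.

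To overcome this I would encode the comparison by summing over the relative order of $r$ and the subtree-roots, equivalently by keeping track of the rank of $r$ among the $n$ labels; this turns the constraint $c_j<r$ into clean bookkeeping and produces a functional (or differential) equation for $y$. Solving that equation and reading off the coefficient of $\frac{x^n}{n!}$ then yields $\prod_{i=1}^{n-1}(n-iu)$, which is the claim after resetting $u=1-t$. An equivalent and, I suspect, cleaner route is to perform exactly this order-bookkeeping at the level of an inclusion–exclusion, via a sign-reversing involution on trees carrying a marked ``bad'' edge; this is precisely the mechanism underlying Drake's inversion theorem, and either packaging leads to the same computation.

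Finally I would fix the base cases $T_1(t)=1$ and $T_2(t)=1+t$ and check that the resulting formula is consistent with the palindromicity $|\T_{n,i}|=|\T_{n,n-1-i}|$ established earlier, which also gives a convenient consistency test at each stage. The expected difficulty is concentrated entirely in the comparison term $\#\{j:c_j<r\}$: making it interact correctly with the exponential/product formalism is the single step that genuinely requires the inversion (or involution) idea, whereas the remaining manipulations are routine.
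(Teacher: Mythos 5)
The paper itself offers no proof of this identity---it is quoted directly from Drake's thesis---so there is no in-paper argument to measure you against; your proposal must stand on its own, and as written it is a well-informed plan rather than a proof. Every checkable assertion in it is correct: the substitution $u=1-t$ does turn each factor into $n-iu$, the specializations at $t=0$ and $t=1$ give $(n-1)!$ and $n^{n-1}$ as you say, the root-deletion recursion $\des(T)=\sum_j\des(T_j)+\#\{j : c_j<r\}$ is right, and your diagnosis that the comparison term $\#\{j : c_j<r\}$ is exactly what prevents the exponential formula from applying is the correct identification of where the difficulty lives. But the step that carries all the mathematical content is never executed. You say you would ``encode the comparison by summing over the relative order of $r$ and the subtree-roots,'' obtaining ``a functional (or differential) equation for $y$,'' and that solving it and extracting coefficients ``then yields $\prod_{i=1}^{n-1}(n-iu)$''---yet no such equation is written down, let alone solved. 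The alternative route you offer, ``a sign-reversing involution on trees carrying a marked bad edge,'' is described only by the remark that it is ``precisely the mechanism underlying Drake's inversion theorem,'' which amounts to citing the result you set out to prove. Since you yourself flag this as ``the single step that genuinely requires the inversion (or involution) idea,'' the gap sits exactly at the load-bearing point.

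To close it you would have to do one of the following concretely: (i) define the signed set (trees with a distinguished edge or violation), the sign, and the involution; verify that the involution is sign-reversing and that its fixed points are enumerated by $\prod_{i=1}^{n-1}((n-i)+it)$; or (ii) actually derive the refined recursion that tracks the rank of the root among the $n$ labels, show that the resulting sum telescopes, and confirm $T_n(t)=\prod_{i=1}^{n-1}(n-iu)$ by induction on $n$; or (iii) give a code-style bijection in the spirit of E\u{g}ecio\u{g}lu and Remmel, sending a rooted tree to a sequence in which the $i$-th coordinate independently contributes a factor $(n-i)+it$, which is arguably the most elementary way to obtain the complete factorization. The base cases $T_1(t)=1$, $T_2(t)=1+t$ and the palindromicity check you propose are useful sanity tests but do not substitute for any of these.
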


In particular, setting $t=1$ in (\ref{equation:drake}) reduces to the classical 
formula $|\T_n|=n^{n-1}$. Equation (\ref{equation:drake}) implies  that 
all the roots of this polynomial are 
real and negative. It is known and not difficult to show that a real-rooted symmetric polynomial 
with 
positive real coefficients is $\gamma$-positive (see \cite{Branden2014,Gal2005}). For example 
$T_3(t)=2(1+t)^2+t$, so $\gamma_0=2$ and $\gamma_1=1$.
Although $T_n(t)$ is not in general an $h$-vector of a convex polytope (for example $h_0 \neq 1$ 
for $n 
\ge 3$), 
it is of interest to find combinatorial formulas and interpretations of positive 
$\gamma$-coefficients of general symmetric polynomials. Let $\gamma_j(T_n(t))$ denote the 
 $j$-th gamma coefficient of the symmetric polynomial $T_n(t)$.

Equation (\ref{equation:drake}) can be used to find a formula for 
the coefficients $\gamma_j$ of $T_n(t)$.

\begin{theorem}\label{theorem:gammaformulas} For $n \ge 1$,
 \begin{equation}\label{equation:gammaformulas}
 \displaystyle
 \gamma_j(T_n(t))= \begin{cases}
   \displaystyle \sum_{\substack{J  \subset [\frac{n-1}{2}]\\|J|=j}} \prod_{i \in J}(n-2i)^2
\prod_{s \in [\frac{n-1}{2}] \setminus J} s(n-s)& \text{ if $n$ is odd}\\
    \displaystyle \frac{n}{2}\sum_{\substack{J  \subset [\frac{n-2}{2}]\\|J|=j}} \prod_{i \in
J}(n-2i)^2 \prod_{s \in [\frac{n-2}{2}] \setminus J} s(n-s) & \text{ if $n$ is even}\\
  \end{cases}
\end{equation}
\end{theorem}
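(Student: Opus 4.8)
The plan is to start directly from Drake's product formula (\ref{equation:drake}) and to exploit a symmetry between the factor indexed by $i$ and the factor indexed by $n-i$. Writing the $i$-th factor as $(n-i)+it$, the $(n-i)$-th factor is $i+(n-i)t$, so the two are ``reverses'' of one another. I would pair them and compute their product. The central observation, which I expect to be the crux of the whole argument, is the identity
\begin{equation}
\bigl((n-i)+it\bigr)\bigl(i+(n-i)t\bigr)=i(n-i)(1+t)^2+(n-2i)^2\,t. \notag
\end{equation}
One verifies this by expanding: the left side equals $i(n-i)+\bigl((n-i)^2+i^2\bigr)t+i(n-i)t^2$, and subtracting $i(n-i)(1+t)^2$ leaves $\bigl((n-i)^2+i^2-2i(n-i)\bigr)t=(n-2i)^2 t$. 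This already writes each paired product as an element of the span of $(1+t)^2$ and $t$, which is exactly the shape needed to read off a $\gamma$-expansion.

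Next I would organize the product according to the parity of $n$. When $n$ is odd, $n-1$ is even and the indices $i=1,\dots,n-1$ split perfectly into the pairs $\{i,n-i\}$ with $i$ ranging over $[\tfrac{n-1}{2}]$, giving
\begin{equation}
T_n(t)=\prod_{i=1}^{(n-1)/2}\Bigl(i(n-i)(1+t)^2+(n-2i)^2 t\Bigr). \notag
\end{equation}
When $n$ is even the middle index $i=n/2$ is self-paired and contributes the leftover linear factor $(n-\tfrac{n}{2})+\tfrac{n}{2}t=\tfrac{n}{2}(1+t)$, while the remaining indices pair up over $[\tfrac{n-2}{2}]$, yielding the analogous product multiplied by $\tfrac{n}{2}(1+t)$.

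Then I would expand the product over pairs. For each pair index $i$, choosing either the term $(n-2i)^2 t$ or the term $i(n-i)(1+t)^2$ corresponds to selecting a subset $J$ of the pair-index set, with the indices in $J$ being those for which the $t$-term is chosen. A choice with $|J|=j$ produces the monomial
\begin{equation}
\Bigl(\prod_{i\in J}(n-2i)^2\Bigr)\Bigl(\prod_{s\notin J}s(n-s)\Bigr)\,t^{\,j}(1+t)^{2|J^c|}, \notag
\end{equation}
where $J^c$ is the complement of $J$ in the pair-index set. Since $|J^c|=\tfrac{n-1}{2}-j$ in the odd case, and $|J^c|=\tfrac{n-2}{2}-j$ together with the extra factor $(1+t)$ coming from $\tfrac{n}{2}(1+t)$ in the even case, the power of $(1+t)$ is exactly $(n-1)-2j=d-2j$ in both cases, where $d=n-1$ is the degree of $T_n(t)$. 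Thus every term of the expansion is a constant multiple of a single $\gamma$-basis element $t^j(1+t)^{d-2j}$, and grouping terms by $j=|J|$ collects precisely the coefficient of $t^j(1+t)^{d-2j}$. Because $\{t^j(1+t)^{d-2j}\}$ is a basis, uniqueness forces this coefficient to equal $\gamma_j(T_n(t))$, and summing over all $J$ of size $j$ reproduces the two stated formulas (the even case carrying the global prefactor $\tfrac{n}{2}$).

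Finally, I would stress that the only genuinely creative step is discovering the pairing identity; once it is in hand the argument is pure bookkeeping. The one place demanding care is the parity split, where I must track the exponent of $(1+t)$ so that it lands exactly on $d-2j$ and the collected terms really are $\gamma$-basis elements rather than arbitrary symmetric polynomials.
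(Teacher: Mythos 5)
Your proposal is correct and follows essentially the same route as the paper: the same pairing identity $\bigl((n-i)+it\bigr)\bigl(i+(n-i)t\bigr)=i(n-i)(1+t)^2+(n-2i)^2t$, the same parity split with the leftover factor $\tfrac{n}{2}(1+t)$ when $n$ is even, and then expansion of the product to read off the $\gamma$-coefficients. You actually spell out the final expansion and the uniqueness argument more explicitly than the paper, which simply states that the factored form implies the formula.
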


\begin{proof}
 If we multiply
 \begin{align*}
 (n-i+it)(i+(n-i)t)&=(n-i)i+[(n-i)^2+i^2]t+(n-i)it^2\\
 &=(n-i)i(1+t^2)+[(n-i)^2+i^2]t\\
 &=(n-i)i(1+2t+t^2) + [(n-i)^2-2i(n-i)+i^2]t\\
 &=(n-i)i(1+t)^2+(n-2i)^2t.
 \end{align*}
Equation \ref{equation:drake} can be written as
\[\displaystyle
 \T(t)= \begin{cases}
   \displaystyle \prod_{i=1}^{\frac{n-1}{2}}[(n-i)i(1+t)^2+(n-2i)^2t] &  \text{ if $n$ is
odd,}\\
    \displaystyle \frac{n}{2}(1+t)\prod_{i=1}^{\frac{n-2}{2}}[(n-i)i(1+t)^2+(n-2i)^2t] &
\text{ if $n$ is even,}\\ \end{cases}
\]
implying Formula \ref{equation:gammaformulas}.
\end{proof}

The purpose of this note is to present four different combinatorial interpretations for the 
coefficients $\gamma_j$ that are consequences of results in the work of the
author and Wachs in \cite{DleonWachs2013} and of the author in \cite{Dleon2014}. We present now one 
of these combinatorial interpretations, whose proof will be given in Section 
\ref{section:binarytrees}.

A \emph{planar leaf-labeled binary tree} with label set $[n]$ is a rooted tree (a priori 
without 
labels) in which the 
set of children of every internal node is a totally ordered set with exactly two elements (the 
left and right children) and where 
each leaf has been asigned a unique element from the set $[n]$. 
By a subtree in a rooted tree $T$ we mean the rooted tree induced by the descendents of any node 
$x$ of $T$, including and rooted at $x$.
We say that a planar leaf-labeled 
binary tree with label set $[n]$ is 
\emph{normalized} 
if in each subtree, the leftmost leaf is the one with the smallest label. We denote 
the set of 
normalized binary trees with label set $[n]$ by $\nor_n$. All 
normalized trees with leaf labels in $[3]$ are illustrated in Figure 
\ref{figure:normalizedn3}. 

A \emph{right descent} in a normalized tree is an internal node that is the right child of its 
parent. For $T \in \nor_n$ we define $\rdes(T):=|\{\text{right descents of T}\}|$. A \emph{double 
right descent} is a right descent whose parent is also a right 
descent. We denote by $\ndrd_n$ the set of trees in $\nor_n$ with no double right descents. 

\begin{theorem}\label{theorem:gammacoefficientsrightdescents}
For $n\ge 1$ and $j \in \{0,1,\cdots,\lfloor  \frac{n-1}{2} \rfloor\}$,
\begin{align*}
 \gamma_j(T_n(t)) = |\{T \in \ndrd_n\,\mid\, \rdes(T)=j\}|.
\end{align*}
\end{theorem}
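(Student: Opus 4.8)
The plan is to compute the generating polynomial $G_n(t):=\sum_{T\in\ndrd_n}t^{\rdes(T)}$ directly and to show that it coincides with $\sum_j\gamma_j(T_n(t))t^j$. The computation in the proof of Theorem \ref{theorem:gammaformulas} already shows that
\[
\sum_j\gamma_j(T_n(t))\,t^j=\prod_{i=1}^{\lfloor\frac{n-1}{2}\rfloor}\bigl[i(n-i)+(n-2i)^2t\bigr]
\]
(with an extra factor $\tfrac n2$ when $n$ is even), so it suffices to prove that $G_n(t)$ equals this product.

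First I would set up a recursion for $G_n(t)$ by decomposing a tree $T\in\ndrd_n$ at its root into a left subtree $L$ on $a$ leaves and a right subtree $R$ on $b=n-a$ leaves. Since normalization depends only on the relative order of the labels, a tree in $\ndrd_a$ is determined by its shape together with the label set of size $a$; the smallest label of $T$ must lie in $L$, giving $\binom{n-1}{a-1}$ choices of label sets. The left subtree may be an arbitrary element of $\ndrd_a$ and creates no right descent at the root, contributing a factor $G_a(t)$. The delicate point is the right subtree: if $b=1$ then $R$ is a single leaf and contributes $1$; if $b\ge 2$ then the root of $R$ is an internal right child, hence a right descent (a factor $t$), and the no-double-right-descent condition forces the right child of that node to be a leaf. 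Introducing the auxiliary polynomial $H_b(t)$ summing $t^{\rdes}$ over trees of $\ndrd_b$ whose root has a leaf as its right child, the same root-decomposition gives $H_b(t)=(b-1)G_{b-1}(t)$ for $b\ge 2$ (the right leaf is any non-minimal label and the left subtree is arbitrary in $\ndrd_{b-1}$). Assembling the pieces yields, for $n\ge 2$,
\[
G_n(t)=\sum_{a=1}^{n-1}\binom{n-1}{a-1}G_a(t)\,\rho_{n-a}(t),
\]
where $\rho_1=1$ and $\rho_b(t)=t\,(b-1)\,G_{b-1}(t)$ for $b\ge 2$, together with $G_1(t)=1$.

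Then I would verify that the product formula satisfies exactly this recursion with the same initial condition; since the recursion determines $G_n(t)$ uniquely, this finishes the proof. The convolution with $\binom{n-1}{a-1}$ is handled by passing to exponential generating functions: writing $g(x)=\sum_{n\ge 1}G_n(t)x^n/n!$, the recursion becomes the nonlinear differential equation $g''=(g')^2+t\,x\,(g')^3$ with $g(0)=0$ and $g'(0)=1$, so the task reduces to checking that the exponential generating function of the products $\prod_i[i(n-i)+(n-2i)^2t]$ solves the same equation. I have confirmed the agreement through $n=5$, where the recursion gives $G_1=1$, $G_2=1$, $G_3=2+t$, $G_4=6+8t$, $G_5=24+58t+9t^2$, matching Theorem \ref{theorem:gammaformulas}, and I would then establish the identity in general.

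I expect this last step — proving that Drake's product satisfies the tree recursion — to be the main obstacle, since the differential equation is nonlinear and Drake's polynomials carry no obviously compatible convolution structure. An appealing alternative, and probably the route indicated by the abstract, is to bypass the generating-function identity and instead invoke the structural results of \cite{DleonWachs2013,Dleon2014}: there $\ndrd_n$ (or a closely related family of normalized binary trees) indexes an explicit basis of a graded space attached to the poset of weighted partitions and the free multibracketed Lie algebra, and the grading by $\rdes$ has graded dimensions equal to the $\gamma$-coefficients of $T_n(t)$; from that identification Theorem \ref{theorem:gammacoefficientsrightdescents} is immediate. This mirrors, on the binary-tree side, the way the Shapiro--Woan--Getu interpretation of the Eulerian $\gamma$-coefficients \cite{ShapiroWoanGetu1983} is obtained from a Foata--Strehl-type group action whose orbit representatives are exactly the permutations with no double descent.
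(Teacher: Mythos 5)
Your proposal does not contain a complete proof: both routes you describe stop short of the decisive step. On the first route, the root decomposition and the resulting recursion $G_n(t)=\sum_{a}\binom{n-1}{a-1}G_a(t)\rho_{n-a}(t)$ are set up correctly (forcing the right child of the root of the right subtree to be a leaf is the right way to encode the no-double-right-descent condition, and your values through $n=5$ check out), but the theorem is precisely the assertion that this recursively defined family coincides with the $\gamma$-vector of Drake's product, and you explicitly leave that identity unproved, calling it ``the main obstacle.'' Verifying that the exponential generating function of $\prod_{i}[i(n-i)+(n-2i)^2t]$ satisfies a nonlinear differential equation is not a routine check, and nothing in your write-up indicates how it would be carried out; as it stands this route is a conjecture verified for $n\le 5$.

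The paper takes the second route you gesture at, but the mechanism is different from what you describe, and the difference matters. The input from \cite{DleonWachs2013} is not a basis graded by $\rdes$ whose graded dimensions are the $\gamma$-coefficients; it is the equicardinality $|\comb_{n,i}|=|\T_{n,i}|$, where $\comb_{n,i}$ is the set of \emph{bicolored combs} (normalized trees with internal nodes colored $0$ or $1$, each right descent colored $0$ with its parent colored $1$) having exactly $i$ nodes of color $1$. This identifies $T_n(t)$ with the color-enumerator of $\comb_n$. One then observes that the underlying uncolored tree of any bicolored comb lies in $\ndrd_n$, that each right descent forces a factor of $t$ (its parent must be colored $1$), and that the remaining $\free(T)=n-1-2\rdes(T)$ internal nodes admit a free $(\ZZ_2)^{\free(T)}$-action by toggling colors, so the fiber over a fixed $T\in\ndrd_n$ contributes $t^{\rdes(T)}(1+t)^{n-1-2\rdes(T)}$, which is the $\gamma$-expansion. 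This is indeed the Foata--Strehl-style orbit argument you allude to in your final sentence, but it acts on colorings of a fixed tree rather than on $\ndrd_n$ itself, and it genuinely requires the comb/tree equicardinality as external input from the Lie-algebraic results. To make your submission a proof you must either supply the general verification that Drake's product satisfies your recursion, or carry out the bicolored-comb counting argument in full rather than citing it in outline.
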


As it is illustrated in Figure \ref{figure:normalizedn3}, there are two trees in $\ndrd_3$ (for 
$n=3$ it happens to be equal to $\nor_3$) with $\rdes(T)=0$ and one with $\rdes(T)=1$, 
corresponding to $\gamma_0=2$ and $\gamma_1=1$ respectively.

\begin{figure}[ht]
  \begin{tikzpicture}[scale=0.7]

\draw[dotted, very thick] (-1,-2.5) -- (-1,1.5) -- (4,1.5) --(4,-2.5)-- cycle;
\draw[dotted, very thick] (4.5,-2.5) -- (4.5,1.5) -- (12.5,1.5) --(12.5,-2.5)-- cycle;
\draw (1.5,-2) node {\tiny $\rdes(T)=1$};
\draw (9,-2) node {\tiny $\rdes(T)=0$};

\begin{scope}[xshift=9cm, yshift=-1cm]

\tikzstyle{every node}=[draw,inner sep=1mm,scale=1]
    \draw [circle] (1,1)  node (i1){$$};
    \draw [circle] (2,2)  node (i2){$$};

\tikzstyle{every node}=[inner sep=1pt, minimum width=14pt,scale=1]

    \draw (0,0)  node (m){$1$};
    \draw (2,0)  node (l1){$2$};
    \draw (3,1)  node (l2){$3$};

    \draw (m) --  (i1) ;
    \draw (i1) --  (l1) ;
    \draw (i1) --  (i2) ;
    \draw (i2) --  (l2) ;
\end{scope}

\begin{scope}[xshift=5cm,yshift=-1cm]

\tikzstyle{every node}=[draw,inner sep=1mm,scale=1]
    \draw [circle] (1,1)  node (i1){$$};
    \draw [circle] (2,2)  node (i2){$$};

\tikzstyle{every node}=[inner sep=1pt, minimum width=14pt,scale=1]

    \draw (0,0)  node (m){$1$};
    \draw (2,0)  node (l1){$3$};
    \draw (3,1)  node (l2){$2$};

    \draw (m) --  (i1) ;
    \draw (i1) --  (l1) ;
    \draw (i1) --  (i2) ;
    \draw (i2) --  (l2) ;
\end{scope}
\begin{scope}[xshift=-1.5cm,yshift=-1cm]

\tikzstyle{every node}=[draw,inner sep=1mm,scale=1]
    \draw [circle] (3,1)  node (i1){$$};
    \draw [circle] (2,2)  node (i2){$$};

\tikzstyle{every node}=[inner sep=1pt, minimum width=14pt,scale=1]

    \draw (2,0)  node (m){$2$};
    \draw (4,0)  node (l1){$3$};
    \draw (1,1)  node (l2){$1$};

    \draw (m) --  (i1) ;
    \draw (i1) --  (l1) ;
    \draw (i1) --  (i2) ;
    \draw (i2) --  (l2) ;
\end{scope}

\end{tikzpicture}
 \caption{Set of normalized trees in $\ndrd_3=\nor_3$}
 \label{figure:normalizedn3}
\end{figure}
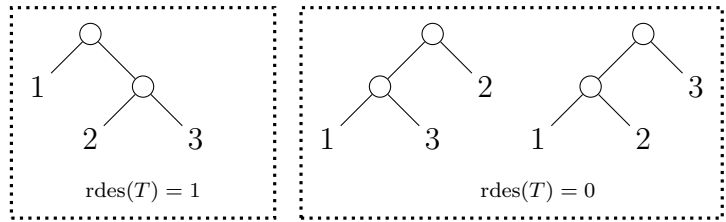

In Section \ref{section:binarytrees} we provide the proof of Theorem 
\ref{theorem:gammacoefficientsrightdescents} and an additional version of Theorem 
\ref{theorem:gammacoefficientsrightdescents} also in terms of normalized trees but with a statistic 
different than $\rdes$. In Section \ref{section:stirlingpermutations} we provide two additional 
versions of Theorem \ref{theorem:gammacoefficientsrightdescents} in terms of the Stirling 
permutations introduced by Gessel and Stanley in \cite{StanleyGessel1978}. In Section 
\ref{section:epositivity} we discuss a generalization of the $\gamma$-positivity of $T_n(t)$ to 
the positivity of certain symmetric function in the basis of elementary symmetric functions.

\section{Combinatorial interpretations in terms of binary trees}\label{section:binarytrees}

Now we consider normalized trees $T$ where every internal node $x$ of $T$ has been assigned an 
element $\clr(x) \in \{0,1\}$. We call an element of this set of trees a \emph{bicolored 
normalized tree} on $[n]$. A \emph{bicolored comb} is a bicolored normalized tree $T$ satisfying 
the following coloring restriction:

\begin{enumerate}
 \item[(C)] If $x$ is a right descent of $T$ then $\clr(x)=0$ and $\clr(\p(x))=1$.
\end{enumerate}

We denote by $\comb_n$ the set of bicolored combs and by $\comb_{n,i}$ the set of bicolored combs 
where $i$ internal nodes have been colored $1$ (and $n-1-i$ colored $0$). Figure 
\ref{figure:bicoloredcombsn3} illustrates the bicolored combs on $[3]$ grouped by the number of 
internal nodes that have been colored $1$.

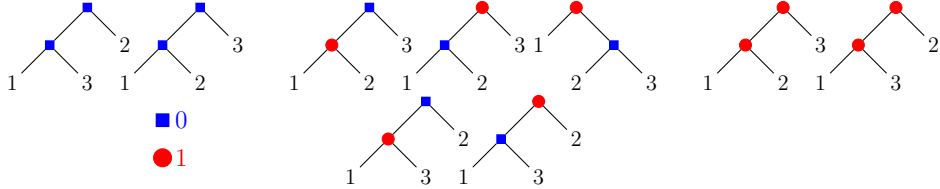
\begin{figure}[ht]
  \begin{tikzpicture}[scale=0.5]
\tikzstyle{every node}=[fill, draw,inner sep=3pt,scale=0.8]
    \draw [color=blue] (1,-1)  node (i1){};
    \draw [circle, color=red] (1,-2)  node (i2){};
\tikzstyle{every node}=[inner sep=3pt,scale=0.8]
	\draw [color=blue] (1.5,-1)  node (i1){0};
    \draw [circle, color=red] (1.5,-2)  node (i2){$1$};

\begin{scope}[xshift=-3cm]

\tikzstyle{every node}=[fill, draw,inner sep=2pt,scale=0.8]
    \draw [color=blue] (1,1)  node (i1){};
    \draw [color=blue] (2,2)  node (i2){};

\tikzstyle{every node}=[inner sep=1pt, minimum width=14pt,scale=0.7]

    \draw (0,0)  node (m){$1$};
    \draw (2,0)  node (l1){$3$};
    \draw (3,1)  node (l2){$2$};

    \draw (m) --  (i1) ;
    \draw (i1) --  (l1) ;
    \draw (i1) --  (i2) ;
    \draw (i2) --  (l2) ;
\end{scope}
\begin{scope}[xshift=0]
\tikzstyle{every node}=[draw,circle,inner sep=1mm,scale=0.8]

\tikzstyle{every node}=[fill, draw,inner sep=2pt,scale=0.8]
    \draw [color=blue] (1,1)  node (i1){};
    \draw [color=blue] (2,2)  node (i2){};

\tikzstyle{every node}=[inner sep=1pt, minimum width=14pt,scale=0.7]

    \draw (0,0)  node (m){$1$};
    \draw (2,0)  node (l1){$2$};
    \draw (3,1)  node (l2){$3$};

    \draw (m) --  (i1) ;
    \draw (i1) --  (l1) ;
    \draw (i1) --  (i2) ;
    \draw (i2) --  (l2) ;
\end{scope}

\begin{scope}[xshift=4.5cm,yshift=0]
\tikzstyle{every node}=[fill, draw,inner sep=2pt,scale=0.8]
    \draw [circle,color=red] (1,1)  node (i1){};
    \draw [color=blue] (2,2)  node (i2){};

\tikzstyle{every node}=[inner sep=1pt, minimum width=14pt,scale=0.7]

    \draw (0,0)  node (m){$1$};
    \draw (2,0)  node (l1){$2$};
    \draw (3,1)  node (l2){$3$};

    \draw (m) --  (i1) ;
    \draw (i1) --  (l1) ;
    \draw (i1) --  (i2) ;
    \draw (i2) --  (l2) ;
\end{scope}

\begin{scope}[xshift=7.5cm,yshift=0]
\tikzstyle{every node}=[fill, draw,inner sep=2pt,scale=0.8]
    \draw [color=blue] (1,1)  node (i1){};
    \draw [circle,color=red] (2,2)  node (i2){};

\tikzstyle{every node}=[inner sep=1pt, minimum width=14pt,scale=0.7]

    \draw (0,0)  node (m){$1$};
    \draw (2,0)  node (l1){$2$};
    \draw (3,1)  node (l2){$3$};

    \draw (m) --  (i1) ;
    \draw (i1) --  (l1) ;
    \draw (i1) --  (i2) ;
    \draw (i2) --  (l2) ;
\end{scope}

\begin{scope}[xshift=6cm,yshift=-2.5cm]
\tikzstyle{every node}=[fill, draw,inner sep=2pt,scale=0.8]
    \draw [circle,color=red] (1,1)  node (i1){};
    \draw [color=blue] (2,2)  node (i2){};

\tikzstyle{every node}=[inner sep=1pt, minimum width=14pt,scale=0.7]

    \draw (0,0)  node (m){$1$};
    \draw (2,0)  node (l1){$3$};
    \draw (3,1)  node (l2){$2$};

    \draw (m) --  (i1) ;
    \draw (i1) --  (l1) ;
    \draw (i1) --  (i2) ;
    \draw (i2) --  (l2) ;
\end{scope}

\begin{scope}[xshift=9cm,yshift=-2.5cm]
\tikzstyle{every node}=[fill, draw,inner sep=2pt,scale=0.8]
    \draw [color=blue] (1,1)  node (i1){};
    \draw [circle,color=red] (2,2)  node (i2){};

\tikzstyle{every node}=[inner sep=1pt, minimum width=14pt,scale=0.7]

    \draw (0,0)  node (m){$1$};
    \draw (2,0)  node (l1){$3$};
    \draw (3,1)  node (l2){$2$};

    \draw (m) --  (i1) ;
    \draw (i1) --  (l1) ;
    \draw (i1) --  (i2) ;
    \draw (i2) --  (l2) ;
\end{scope}

\begin{scope}[xshift=10cm]
\tikzstyle{every node}=[fill, draw,inner sep=2pt,scale=0.8]
    \draw [color=blue] (3,1)  node (i1){};
    \draw [circle,color=red] (2,2)  node (i2){};

\tikzstyle{every node}=[inner sep=1pt, minimum width=14pt,scale=0.7]

    \draw (2,0)  node (m){$2$};
    \draw (4,0)  node (l1){$3$};
    \draw (1,1)  node (l2){$1$};
    
    \draw (m) --  (i1) ;
    \draw (i1) --  (l1) ;
    \draw (i1) --  (i2) ;
    \draw (i2) --  (l2) ;
\end{scope}

\begin{scope}[xshift=18.5cm]
\tikzstyle{every node}=[fill, draw,inner sep=2pt,scale=0.8]
    \draw [circle,color=red] (1,1)  node (i1){};
    \draw [circle,color=red] (2,2)  node (i2){};

\tikzstyle{every node}=[inner sep=1pt, minimum width=14pt,scale=0.7]

    \draw (0,0)  node (m){$1$};
    \draw (2,0)  node (l1){$3$};
    \draw (3,1)  node (l2){$2$};
    
    \draw (m) --  (i1) ;
    \draw (i1) --  (l1) ;
    \draw (i1) --  (i2) ;
    \draw (i2) --  (l2) ;
\end{scope}
\begin{scope}[xshift=15.5cm]
\tikzstyle{every node}=[fill, draw,inner sep=2pt,scale=0.8]
    \draw [circle,color=red] (1,1)  node (i1){};
    \draw [circle,color=red] (2,2)  node (i2){};

\tikzstyle{every node}=[inner sep=1pt, minimum width=14pt,scale=0.7]

    \draw (0,0)  node (m){$1$};
    \draw (2,0)  node (l1){$2$};
    \draw (3,1)  node (l2){$3$};

    \draw (m) --  (i1) ;
    \draw (i1) --  (l1) ;
    \draw (i1) --  (i2) ;
    \draw (i2) --  (l2) ;
\end{scope}

\end{tikzpicture}
 \caption{Set of bicolored combs on $[3]$}
 \label{figure:bicoloredcombsn3}
\end{figure}

Denote by $\widetilde T \in \nor_n$ the underlying uncolored normalized tree associated to a tree $T 
\in \comb_n$. Note that the coloring condition (C) implies that $\widetilde T \in 
\ndrd_n$. Indeed, in a 
double right descent the coloring condition (C) cannot be satisfied since the parent of a double 
right descent is also a right descent.
Also note that the 
monochromatic combs in $\comb_{n,0}$ and
$\comb_{n,n-1}$ are just the traditional left combs that are described in \cite{Wachs1998} and 
that index a basis for the space $\lie(n)$, the multilinear component  of the free Lie 
algebra over $\CC$ on $n$ generators (see \cite{Wachs1998} for details).
Liu \cite{Liu2010} and 
Dotsenko-Khoroshkin \cite{DotsenkoKhoroshkin2007} independently proved a conjecture of Feigin 
regarding the dimension of the multilinear component $\lie_2(n)$ of the free Lie algebra with two 
compatible brackets, a generalization of $\lie(n)$. 

\begin{theorem}[{\cite{DotsenkoKhoroshkin2007,Liu2010}}]
 For $n\ge 1$, $\dim \lie_2(n)=|\T_{n}|.$
\end{theorem}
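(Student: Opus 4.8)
The plan is to realize $\lie_2(n)$ through an explicit monomial basis and then reduce the theorem to the counting identity $|\T_n| = n^{n-1}$, which already follows from Drake's formula (\ref{equation:drake}) by setting $t = 1$: each factor becomes $(n-i)+i = n$, so $\prod_{i=1}^{n-1}((n-i)+i) = n^{n-1}$. Recall that the free Lie algebra with two compatible brackets is generated by $n$ generators together with two antisymmetric brackets $[\cdot,\cdot]_0$ and $[\cdot,\cdot]_1$ subject to the condition that every linear combination $\alpha[\cdot,\cdot]_0 + \beta[\cdot,\cdot]_1$ is again a Lie bracket. Expanding the Jacobi identity for this combination and collecting the coefficients of $\alpha^2$, $\beta^2$ and $\alpha\beta$ shows that the defining relations are the two ordinary Jacobi identities for $[\cdot,\cdot]_0$ and $[\cdot,\cdot]_1$ separately, together with one mixed Jacobi identity relating the two brackets. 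The multilinear component $\lie_2(n)$ is then spanned by the bracketings of $x_1,\dots,x_n$ in which each generator occurs exactly once; such a bracketing is encoded by a binary tree with $n$ leaves labeled bijectively by $[n]$ and with each internal node colored by an element of $\{0,1\}$ recording which bracket was used.

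First I would use antisymmetry to normalize each such colored tree, forcing the smallest leaf label to the left in every subtree as in the definition of $\nor_n$, and then apply the three Jacobi relations as straightening rules. The crucial point is that the mixed Jacobi identity is exactly what lets one rewrite a configuration violating condition (C) — a right descent whose color, or whose parent's color, is wrong — in terms of trees satisfying (C); this is consistent with the observation that (C) forbids double right descents. This should show that the bicolored combs $\comb_n$ span $\lie_2(n)$. Linear independence is the harder half; I would deduce it from the machinery of \cite{DleonWachs2013}, where the monochromatic case already identifies the combs of a single color with a basis of $\lie(n)$, and where the weighted partition poset provides an $\sym_n$-equivariant model whose top homology carries the bicolored combs as a basis. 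Granting this, $\dim \lie_2(n) = |\comb_n| = \sum_{i=0}^{n-1} |\comb_{n,i}|$.

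It then remains to prove $|\comb_{n,i}| = |\T_{n,i}|$ for all $i$, which gives $|\comb_n| = |\T_n| = n^{n-1}$ and finishes the argument. I would do this by a color-to-descent bijection: a bicolored comb is determined by its underlying shape in $\ndrd_n$ together with a choice of colors compatible with (C), and this datum can be matched with a rooted tree in $\T_n$ so that the number of internal nodes colored $1$ corresponds to $\des$; as a consistency check, the monochromatic extremes $\comb_{n,0}$ and $\T_{n,0}$ both have size $(n-1)!$. The main obstacle is the linear-independence step: spanning is a finite rewriting argument, but ruling out further collapses among the combs requires the representation-theoretic input identifying $\lie_2(n)$ with the homology of the weighted partition poset, and it is precisely here that one must invoke \cite{DleonWachs2013} rather than argue by hand. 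An alternative route, closer to the original proofs, bypasses the basis entirely: following Dotsenko--Khoroshkin one sets up the shuffle operad governing compatible brackets, computes a quadratic Gr\"obner basis, and counts the resulting normal forms, which again number $n^{n-1}$; there the obstacle is verifying that the chosen monomial order yields a finite (Koszul) Gr\"obner basis.
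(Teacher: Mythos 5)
A preliminary remark: the paper offers no proof of this statement. It is quoted as a known theorem of Dotsenko--Khoroshkin and Liu, so there is no internal argument to compare yours against, and any proof necessarily imports material from outside the note.

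Judged on its own terms, your sketch has a genuine gap at its last, load-bearing step. After reducing to $\dim\lie_2(n)=|\comb_n|$, you propose to finish by exhibiting ``a color-to-descent bijection'' realizing $|\comb_{n,i}|=|\T_{n,i}|$. No such bijection is known: the paper explicitly poses finding one as an Open Problem, and the equality $|\comb_{n,i}|=|\T_{n,i}|$ appears there only as Corollary \ref{corollary:comb_tree_equicardinal}, obtained by combining Liu's refined dimension formula $\dim\lie_2(n,i)=|\T_{n,i}|$ --- i.e.\ the very theorem you are proving, in graded form --- with the identification $\dim\lie_2(n,i)=|\comb_{n,i}|$ from \cite{DleonWachs2013}. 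As written, the plan is therefore circular. Even the coarser count $|\comb_n|=n^{n-1}$ that you actually need is not elementary: summing $2^{\free(\mathfrak{T})}$ over $\mathfrak{T}\in\ndrd_n$ recovers $T_n(1)$ only once one already has the $\gamma$-expansion, which again presupposes the theorem. The workable routes are exactly the ones you relegate to asides: the operadic Gr\"obner-basis computation of Dotsenko--Khoroshkin (or Liu's functional-equation argument), or the M\"obius-function computation for the weighted partition poset in \cite{DleonWachs2013}, which is where the number $n^{n-1}$ and its refinement by $|\T_{n,i}|$ are produced independently. Your consistency check $|\comb_{n,0}|=|\T_{n,0}|=(n-1)!$ is correct but covers only $i=0$. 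The earlier steps --- unwinding compatibility into two Jacobi identities plus a mixed one, and straightening to normalized bicolored trees satisfying condition (C) --- are plausible in outline, but you would still need to show the rewriting terminates and handles shape violations (double right descents) as well as color violations, and the linear-independence half is deferred entirely to the cited machinery rather than argued.
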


In particular, the space $\lie_2(n)$ has the decomposition
\begin{align*}
 \lie_2(n)=\bigoplus_{i=0}^{n-1}\lie_2(n,i),
\end{align*}

where the subspace $\lie_2(n,i)$ is the component generated by certain ``bracketed permutations" 
with exactly $i$ brackets of one of the types. Liu finds the following formula 
for the dimension of $\lie_2(n,i)$.

\begin{theorem}[{\cite[Proposition 11.3]{Liu2010}}] For $n\ge 1$ and $i \in \{0,1,\cdots,n-1\}$,
 \begin{align*}
  \dim \lie_2(n,i)=|\T_{n,i}|.
 \end{align*}
\end{theorem}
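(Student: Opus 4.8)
The plan is to leverage the total-dimension theorem $\dim\lie_2(n)=|\T_n|$ proved just above and upgrade it to the graded statement by a sandwiching argument, so that the delicate question of linear independence is never confronted directly. Recall that the multilinear component $\lie_2(n)$ is spanned by the bicolored bracketed permutations: binary trees with $n$ leaves labeled bijectively by $[n]$ whose $n-1$ internal nodes are each decorated by one of the two bracket types, the grading index $i$ recording the number of internal nodes carrying the distinguished (type-$1$) bracket. These generators are subject to the antisymmetry relations, the two Jacobi relations (one per bracket), and the mixed compatibility (mixed Jacobi) relation. The key structural observation is that each of these relations is homogeneous in the number of type-$1$ brackets — in the mixed relation every term carries exactly one type-$1$ and one type-$2$ bracket — so the decomposition $\lie_2(n)=\bigoplus_i\lie_2(n,i)$ is respected and it suffices to bound each graded piece.

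First I would produce, for each $i$, an explicit spanning set of $\lie_2(n,i)$ by orienting the relations into rewriting rules and reducing every bicolored bracketed permutation to a normal form. The natural family of normal forms is precisely the set of bicolored combs $\comb_{n,i}$ obeying the coloring restriction (C): repeated use of antisymmetry together with the three Jacobi-type relations straightens any monomial into a normalized left-comb whose colors satisfy (C). Since a \emph{spanning} set of a vector space has cardinality at least its dimension, this would yield $\dim\lie_2(n,i)\le|\comb_{n,i}|$. Note that I only need the combs to span, not to be independent, so there is no circularity with the basis results quoted from the author's prior work.

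Next I would compute $|\comb_{n,i}|$ and identify it with $|\T_{n,i}|$. The cleanest route is to show $\sum_i|\comb_{n,i}|\,t^i=\prod_{i=1}^{n-1}\big((n-i)+it\big)=T_n(t)$, either by an insertion recursion on $\comb_n$ reproducing each linear factor, or by an explicit bijection $\comb_{n,i}\simeq\T_{n,i}$ carrying type-$1$ brackets to descending edges. Either way this gives $|\comb_{n,i}|=|\T_{n,i}|$, hence $\dim\lie_2(n,i)\le|\T_{n,i}|$ for every $i$. The sandwich then closes the argument: summing over $i$ yields $\dim\lie_2(n)=\sum_i\dim\lie_2(n,i)\le\sum_i|\T_{n,i}|=|\T_n|$, while the Dotsenko--Khoroshkin--Liu theorem asserts $\dim\lie_2(n)=|\T_n|$. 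As all terms are nonnegative integers and the extreme sums agree, each inequality $\dim\lie_2(n,i)\le|\T_{n,i}|$ must be an equality.

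The main obstacle is the spanning/normal-form step: verifying that the rewriting system built from the compatibility relation terminates and is confluent, so that the combs genuinely span each graded piece. This is where the mixed Jacobi relation makes the situation more intricate than the single-bracket case, and it is a bookkeeping-heavy termination argument with respect to a suitable term order on bicolored trees. By contrast, I expect the enumeration $|\comb_{n,i}|=|\T_{n,i}|$ to be routine once Drake's product formula is invoked, and the final sandwiching to be immediate. The real payoff of this route is that it converts the hard algebraic question — linear independence of the combs — into the more tractable combinatorial claim that they span, extracting independence for free from the already-known total dimension.
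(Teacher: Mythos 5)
First, note that the paper does not actually prove this statement: it is imported verbatim from Liu's Proposition 11.3 (with the ungraded version due independently to Dotsenko--Khoroshkin and Liu), so there is no in-paper argument to compare against, and your sandwich is a genuinely different route from Liu's. Its skeleton is logically sound: the relations of $\lie_2$ are homogeneous in the bigrading, so $\lie_2(n)=\bigoplus_i\lie_2(n,i)$; if the bicolored combs span each graded piece then $\dim\lie_2(n,i)\le|\comb_{n,i}|$; and if moreover $|\comb_{n,i}|=|\T_{n,i}|$, then summing over $i$ against $\dim\lie_2(n)=|\T_n|$ forces equality in every degree.

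The genuine gap is the middle ingredient, $|\comb_{n,i}|=|\T_{n,i}|$, which your argument needs \emph{independently} of the theorem being proved. In this paper that identity is exactly Corollary \ref{corollary:comb_tree_equicardinal}, and it is \emph{deduced from} Liu's theorem together with the D'Le\'on--Wachs count $\dim\lie_2(n,i)=|\comb_{n,i}|$; importing it would make your proof circular. Your two proposed independent derivations are both problematic as stated: the explicit bijection $\comb_{n,i}\to\T_{n,i}$ is precisely the Open Problem recorded in Section \ref{section:binarytrees}, and the ``insertion recursion reproducing each linear factor'' does not obviously exist, because the factors $(n-i)+it$ of Drake's product depend globally on $n$ and do not localize to single-leaf insertions (indeed $|\comb_n|=n^{n-1}$ is not divisible by $|\comb_{n-1}|=(n-1)^{n-2}$, so no fixed-number-of-positions insertion scheme can reproduce the product). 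An independent proof that $\sum_i|\comb_{n,i}|t^i=\prod_{i=1}^{n-1}\bigl((n-i)+it\bigr)$ does exist, but it runs through the EL-shellability and characteristic-polynomial computation of the weighted partition poset in \cite{DleonWachs2013} --- far from routine. The spanning/straightening step is likewise a substantive half of the basis theorem of \cite{DleonWachs2013}, though you correctly flag its difficulty. So the architecture is viable and the ``independence for free'' trick is a nice observation, but both pillars currently rest on unproved, non-routine lemmas, one of which the paper explicitly lists as open in its bijective form.
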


In \cite{DleonWachs2013} the author and Wachs studied the relation between $\lie_2(n,i)$ and the 
cohomology of the maximal intervals of a poset of weighted partitions.  Using poset topology 
techniques they found the following alternative description for the dimension of $\lie_2(n,i)$.

\begin{theorem}[{\cite[Section 5]{DleonWachs2013}}]For $n\ge 1$ and $i \in \{0,1,\cdots,n-1\}$,
 \begin{align*}
  \dim \lie_2(n,i)=|\comb_{n,i}|.
 \end{align*}
\end{theorem}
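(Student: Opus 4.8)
The plan is to reduce the assertion to Liu's theorem $\dim\lie_2(n,i)=|\T_{n,i}|$ by establishing the purely enumerative identity $|\comb_{n,i}|=|\T_{n,i}|$ for every $i$. Setting $C_n(t):=\sum_{i=0}^{n-1}|\comb_{n,i}|\,t^i$, this amounts to showing $C_n(t)=T_n(t)$ and comparing coefficients; the entire combinatorial content is the computation of $C_n(t)$, after which the Lie-algebra dimension enters only through Liu's result.

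I would compute $C_n(t)$ by organizing bicolored combs according to their underlying uncolored tree $\widetilde T\in\nor_n$ and counting, for each fixed $\widetilde T$, the colorings of its $n-1$ internal nodes admitted by condition (C). Write $r:=\rdes(\widetilde T)$. Condition (C) forces every right descent to have $\clr=0$ and its parent to have $\clr=1$. Two observations make the count exact: the map sending a right descent $x$ to its parent $\p(x)$ is injective (an internal node has a unique right child, hence at most one right-descent child), and, since $\widetilde T$ admits an admissible coloring only if it has no double right descent (as noted in the paragraph preceding the theorem), no forced-$1$ node is itself a right descent. Thus the $r$ right descents and the $r$ parents-of-right-descents form $2r$ distinct internal nodes with prescribed colors (in particular $2r\le n-1$), while the remaining $n-1-2r$ internal nodes may be colored freely. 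Marking $1$-colored nodes by $t$, the colorings of $\widetilde T$ contribute $t^{r}(1+t)^{\,n-1-2r}$, and a tree with a double right descent contributes $0$, consistently with $\widetilde T\in\ndrd_n$. Summing over $\widetilde T$ gives
\[
C_n(t)=\sum_{\widetilde T\in\ndrd_n} t^{\rdes(\widetilde T)}\,(1+t)^{\,n-1-2\rdes(\widetilde T)}
=\sum_{r\ge 0}\bigl|\{T\in\ndrd_n:\rdes(T)=r\}\bigr|\,t^{r}(1+t)^{\,n-1-2r}.
\]

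This exhibits $C_n(t)$ in the $\gamma$-basis of the symmetric polynomials of degree $n-1$, with nonnegative coefficients $\bigl|\{T\in\ndrd_n:\rdes(T)=r\}\bigr|$. By Theorem~\ref{theorem:gammacoefficientsrightdescents} these are exactly $\gamma_r(T_n(t))$, so $C_n(t)=T_n(t)=\sum_i|\T_{n,i}|\,t^i$; comparing coefficients yields $|\comb_{n,i}|=|\T_{n,i}|$, and Liu's theorem then gives $\dim\lie_2(n,i)=|\comb_{n,i}|$. Since in the intended logical order Theorem~\ref{theorem:gammacoefficientsrightdescents} is itself a consequence of the present statement, I would instead close the loop without it, by establishing $C_n(t)=T_n(t)$ directly: either through an explicit bijection $\comb_{n,i}\to\T_{n,i}$, or by verifying that $\sum_{r}\bigl|\{T\in\ndrd_n:\rdes(T)=r\}\bigr|\,t^{r}(1+t)^{\,n-1-2r}$ equals Drake's product $\prod_{i=1}^{n-1}((n-i)+it)$ of (\ref{equation:drake}) via the explicit formula of Theorem~\ref{theorem:gammaformulas}.

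The main obstacle is exactly this last identity $C_n(t)=T_n(t)$ proved independently of the right-descent interpretation of the $\gamma$-coefficients. The coloring bookkeeping above is routine; the substance lies in matching the enumeration of $\ndrd_n$ by right descents to Drake's factorization (equivalently, in producing the bijection with $\T_{n,i}$). I expect that the cleanest genuinely independent route is the one taken in \cite{DleonWachs2013}: realize $\lie_2(n,i)$ as the top cohomology of the maximal intervals of the poset of weighted partitions and exhibit a cohomology basis indexed by $\comb_{n,i}$ through a chain-labeling (EL-type) argument, so that $\dim\lie_2(n,i)=|\comb_{n,i}|$ is read off directly. Carrying out that poset-topology computation, rather than the elementary coloring count, is where the real work resides.
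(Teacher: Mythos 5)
There is a genuine gap here, and you essentially concede it yourself in your closing paragraph. Note first that the note under review gives no proof of this statement at all: it is imported verbatim from \cite[Section 5]{DleonWachs2013}, where it is obtained by poset-topology methods (a chain labeling of the maximal intervals of the poset of weighted partitions whose ascent-free chains are indexed by the bicolored combs in $\comb_{n,i}$, yielding a basis of the top cohomology, which is then identified with $\lie_2(n,i)$). Your final paragraph correctly locates the real content there, but you do not carry out any part of that computation, so nothing in your proposal actually establishes the dimension formula.

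Moreover, your proposed reduction runs the paper's logic backwards. In the note, the identity $|\comb_{n,i}|=|\T_{n,i}|$ (Corollary \ref{corollary:comb_tree_equicardinal}) is a \emph{consequence} of the present theorem combined with Liu's theorem $\dim\lie_2(n,i)=|\T_{n,i}|$, and an explicit bijection $\comb_{n,i}\to\T_{n,i}$ is explicitly posed as an open problem. Your coloring count correctly shows that $C_n(t):=\sum_i|\comb_{n,i}|t^i$ equals $\sum_{\mathfrak T\in\ndrd_n}t^{\rdes(\mathfrak T)}(1+t)^{n-1-2\rdes(\mathfrak T)}$; this is exactly the bookkeeping in the paper's proof of Theorem \ref{theorem:gammacoefficientsrightdescents}, used there in the opposite direction. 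But the remaining identity $C_n(t)=T_n(t)$ --- equivalently, that the enumerator of $\ndrd_n$ by $\rdes$ matches the $\gamma$-coefficients extracted from Drake's product in Theorem \ref{theorem:gammaformulas} --- is precisely the nontrivial fact you would need to prove independently, and you offer no argument for it. As written, the proposal is either circular (if it invokes Theorem \ref{theorem:gammacoefficientsrightdescents}, which in the paper depends on the statement being proved) or incomplete (if it does not); the actual work resides in the poset-topology argument of \cite{DleonWachs2013} that you defer to but do not supply.
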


\begin{corollary}\label{corollary:comb_tree_equicardinal} For every $n \ge 1$ and $i \in 
\{0,\cdots, n-1\}$,
 \begin{align*}
  |\comb_{n,i}|=|\T_{n,i}|.
 \end{align*}
\end{corollary}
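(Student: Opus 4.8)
The plan is to obtain the corollary as an immediate consequence of the two theorems that precede it, both of which compute the dimension of the \emph{same} graded vector space $\lie_2(n,i)$. First I would invoke Liu's formula [Proposition 11.3], which gives $\dim \lie_2(n,i) = |\T_{n,i}|$, and then the result of the author and Wachs [Section 5], which gives $\dim \lie_2(n,i) = |\comb_{n,i}|$. Since the left-hand sides of these two identities are literally the same quantity, transitivity forces $|\comb_{n,i}| = |\T_{n,i}|$ for every $n \ge 1$ and every $i \in \{0,\dots,n-1\}$, which is exactly the claim. No computation is required: the entire content has already been packaged into the two cited theorems, and the corollary only records the equality of their respective right-hand sides.

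Because the statement merely chains two known equalities, there is no substantive obstacle to the proof as stated. The only point deserving care is bookkeeping: one must confirm that the two theorems use the same indexing convention for $i$ (the number of brackets of a fixed type in the bracketed-permutation description of $\lie_2(n,i)$, matching the number of descents of a rooted tree and the number of internal nodes colored $1$ in a bicolored comb), so that the graded pieces line up term by term rather than only after summing over $i$. Since both theorems are stated over the identical range $i \in \{0,1,\dots,n-1\}$ and both assert an equality with this graded dimension, the match is automatic, and the corollary follows formally.

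It is worth observing that the corollary asserts the equicardinality of two explicitly described combinatorial families, $\comb_{n,i}$ and $\T_{n,i}$, yet the argument above is purely algebraic, routing through the representation-theoretic quantity $\dim \lie_2(n,i)$. A more satisfying and self-contained alternative would be an explicit bijection $\comb_{n,i} \simeq \T_{n,i}$ carrying a bicolored comb with $i$ internal nodes colored $1$ to a rooted tree with $i$ descents. Producing such a map directly would be the genuinely hard part, since it would have to convert the coloring restriction (C) into the descent structure of a rooted tree; but this is not needed for the corollary as stated, and I would relegate it to a remark rather than attempt it here.
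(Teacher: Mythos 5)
Your proposal is correct and is exactly the paper's (implicit) argument: the corollary is stated immediately after the two theorems identifying $\dim \lie_2(n,i)$ with $|\T_{n,i}|$ (Liu) and with $|\comb_{n,i}|$ (the author and Wachs), and follows by transitivity. Your closing remark about the desirability of an explicit bijection matches the Open Problem the paper records right after the corollary.
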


\begin{problem}
 Find an explicit bijection $\comb_{n,i} \rightarrow \T_{n,i}$, for every $n \ge 
1$ and $i \in \{0,\cdots, n-1\}$.
\end{problem}

\begin{theorem}[{Theorem \ref{theorem:gammacoefficientsrightdescents}}]
For $n\ge 1$ and $j \in \{0,1,\cdots,\lfloor  \frac{n-1}{2} \rfloor\}$,
\begin{align*}
 \gamma_j(T_n(t)) = |\{T \in \ndrd_n\,\mid\, \rdes(T)=j\}|.
\end{align*}
\end{theorem}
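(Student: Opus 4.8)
The plan is to compute the generating polynomial of bicolored combs by the number of internal nodes colored $1$, by summing over the underlying uncolored trees, and then to recognize the resulting expression as the $\gamma$-expansion of $T_n(t)$.

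First, by Corollary \ref{corollary:comb_tree_equicardinal} we have $T_n(t)=\sum_i |\T_{n,i}|t^i=\sum_i |\comb_{n,i}|t^i$, so it suffices to understand how the ``color $1$'' statistic is distributed over $\comb_n$. Every bicolored comb $T$ has a well-defined underlying normalized tree $\widetilde T$, and condition (C) forces $\widetilde T \in \ndrd_n$; thus the forgetful map $T \mapsto \widetilde T$ organizes $\comb_n$ into fibers indexed by $\ndrd_n$. I would fix $\widetilde T \in \ndrd_n$ and count the colorings of its $n-1$ internal nodes that are permitted by (C), keeping track of $t^{\#\{\text{nodes colored }1\}}$.

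The combinatorial heart is a classification of the internal nodes of $\widetilde T$. Condition (C) forces $\clr(x)=0$ at every right descent $x$ and $\clr(\p(x))=1$ at its parent. Writing $R$ for the set of right descents and $P:=\{\p(x)\mid x\in R\}$, I would check two facts: that $x\mapsto \p(x)$ is injective on $R$ (each node has a single right child, so is the parent of at most one right descent), and that $R\cap P=\emptyset$ (if $\p(x)$ were itself a right descent, $x$ would be a double right descent, which $\ndrd_n$ forbids). Hence $|R|=|P|=\rdes(\widetilde T)=:j$, and these $2j$ forced nodes are distinct with mutually consistent colors. Every remaining internal node is neither a right descent nor the parent of one, so its color is unconstrained by (C); conversely, any assignment of colors to these $n-1-2j$ free nodes yields a bicolored tree still satisfying (C), since (C) only constrains right descents and their parents. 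Tracking the statistic, the $j$ forced $1$'s contribute $t^j$, the $j$ forced $0$'s contribute $1$, and each free node contributes $(1+t)$, so the fiber over $\widetilde T$ generates $t^{j}(1+t)^{n-1-2j}$.

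Summing over the fibers then gives $T_n(t)=\sum_{\widetilde T\in\ndrd_n} t^{\rdes(\widetilde T)}(1+t)^{n-1-2\rdes(\widetilde T)}=\sum_{j} |\{T\in\ndrd_n\mid \rdes(T)=j\}|\, t^{j}(1+t)^{n-1-2j}$. Since $\deg T_n(t)=n-1$ and $\{t^{j}(1+t)^{n-1-2j}\}_{j=0}^{\lfloor (n-1)/2\rfloor}$ is a basis for the symmetric polynomials of that degree, comparing this with the defining $\gamma$-expansion of $T_n(t)$ and invoking uniqueness of coordinates yields $\gamma_j(T_n(t))=|\{T\in\ndrd_n\mid \rdes(T)=j\}|$. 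I expect the only delicate step to be the injectivity-and-disjointness bookkeeping of the previous paragraph: it is precisely the no-double-right-descent hypothesis that guarantees the forced nodes split into exactly $2j$ distinct, consistently colored nodes, which is what makes the free exponent equal $n-1-2j$ and causes the whole sum to collapse cleanly onto the $\gamma$-basis.
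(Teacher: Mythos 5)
Your proposal is correct and follows essentially the same route as the paper: both reduce to counting bicolored combs via Corollary \ref{corollary:comb_tree_equicardinal}, fiber them over their underlying trees in $\ndrd_n$, and show each fiber contributes $t^{j}(1+t)^{n-1-2j}$ because the $2j$ nodes forced by (C) are distinct and the remaining $n-1-2j$ nodes are freely $2$-colorable (the paper packages this as $\free(T)+2\rdes(T)=n-1$ together with the free $(\ZZ_2)^m$-action on free nodes, while you spell out the injectivity and disjointness directly). Your bookkeeping of the forced nodes is, if anything, slightly more explicit than the paper's.
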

\begin{proof}
First note that by the comments above if $T\in \comb_n$ then its underlying uncolored 
tree $\widetilde T \in \ndrd_n$.

For a tree $T \in \comb_{n}$ call $\free(T)$ the number of internal nodes that are not right 
descents and whose 
right child is a leaf.  Then in $\ndrd_n$ we have that 
\[
\free(T)+2 \rdes(T)=n-1.
\]
Over the set of bicolored combs with $m$ free nodes there is a free action of $(\ZZ_2)^m$ by 
toggling
the colors of the free nodes. Then there are $2^m$ bicolored combs with the same underlying 
tree $\widetilde T \in \ndrd_n$.
By Corollary \ref{corollary:comb_tree_equicardinal} we can write $T_n(t)$ as 

\begin{align*}
T_n(t)&=\sum_{i=0}^{n-1}|\T_{n,i}|t^i\\
&=\sum_{i=0}^{n-1}|\comb_{n,i}|t^i\\
&=\sum_{T \in \comb_n} t^{|\{x \text{ internal in } T \,\mid\, \clr(x)=1 \}|}\\
&=\sum_{\mathfrak{T} \in \ndrd_n}\sum_{\substack{T \in \comb_n\\ \widetilde T =\mathfrak{T}  }} 
t^{|\{x \text{ internal in } T \,\mid\, \clr(x)=1 \}|}\\
&=\sum_{\mathfrak{T} \in \ndrd_n}t^{\rdes(\mathfrak{T})}(1+t)^{\free(\mathfrak{T})}\\
&=\sum_{\mathfrak{T} \in \ndrd_n}t^{\rdes(\mathfrak{T})}(1+t)^{n-1-2\rdes(\mathfrak{T})}.\qedhere
\end{align*}
\end{proof}

\subsection{A second description in terms of normalized trees}
Define now the \emph{valency} $v(x)$ of a node (internal or leaf) $x$ of $T \in \nor_n$ to be the 
minimal label in the subtree of $T$ rooted at $x$. For an internal node $x$ of $T$ let $L(x)$ and 
$R(x)$ denote the left and right children of $x$ respectively. A \emph{Lyndon node} is an internal 
node $x$ of $T$ such that 
\begin{align}\label{equation:lyndoncondition}
 v(R(L(x)))>v(R(x)).
\end{align}
A \emph{Lyndon tree} is a normalized tree in which all its internal nodes are Lyndon. We denote 
$\nlyn(T)$ the number of non-Lyndon nodes in $T$. A \emph{double non-Lyndon node} is a non-Lyndon 
node that is the left child of its parent and its parent is also a non-Lyndon node. We denote the 
set of trees in $\nor_n$ with no 
double non-Lyndon nodes by $\ndnl_n$. A \emph{bicolored Lyndon tree} is a bicolored normalized tree
satisfying the coloring condition:
\begin{enumerate}
 \item[(L)] For every non-Lyndon node $x$ of $T$ 
then $\clr(x)=0$ and $\clr(L(x))=1$.
\end{enumerate}

The set of bicolored Lyndon trees is denoted $\lyn_n$ and the set of the ones with exactly $i$ 
nodes with color $1$ is denoted $\lyn_{n,i}$. 

\begin{figure}[ht]
  \begin{tikzpicture}[scale=0.5]

\tikzstyle{every node}=[fill, draw,inner sep=3pt,scale=0.8]
    \draw [color=blue] (1,-1)  node (i1){};
    \draw [circle, color=red] (1,-2)  node (i2){};
\tikzstyle{every node}=[inner sep=3pt,scale=0.8]
	\draw [color=blue] (1.5,-1)  node (i1){0};
    \draw [circle, color=red] (1.5,-2)  node (i2){$1$};

\begin{scope}[xshift=-2.5cm]

\tikzstyle{every node}=[fill, draw,inner sep=2pt,scale=0.8]
    \draw [color=blue] (1,1)  node (i1){};
    \draw [color=blue] (2,2)  node (i2){};

\tikzstyle{every node}=[inner sep=1pt, minimum width=14pt,scale=0.7]

    \draw (0,0)  node (m){$1$};
    \draw (2,0)  node (l1){$3$};
    \draw (3,1)  node (l2){$2$};

    \draw (m) --  (i1) ;
    \draw (i1) --  (l1) ;
    \draw (i1) --  (i2) ;
    \draw (i2) --  (l2) ;
\end{scope}

\begin{scope}[xshift=0cm]

\tikzstyle{every node}=[fill, draw,inner sep=2pt,scale=0.8]
    \draw [color=blue] (3,1)  node (i1){};
    \draw [color=blue] (2,2)  node (i2){};

\tikzstyle{every node}=[inner sep=1pt, minimum width=14pt,scale=0.7]

    \draw (2,0)  node (m){$2$};
    \draw (4,0)  node (l1){$3$};
    \draw (1,1)  node (l2){$1$};
    
    \draw (m) --  (i1) ;
    \draw (i1) --  (l1) ;
    \draw (i1) --  (i2) ;
    \draw (i2) --  (l2) ;
\end{scope}

\begin{scope}[xshift=4cm]

\tikzstyle{every node}=[fill, draw,inner sep=2pt,scale=0.8]
    \draw [circle,color=red] (3,1)  node (i1){};
    \draw [color=blue] (2,2)  node (i2){};

\tikzstyle{every node}=[inner sep=1pt, minimum width=14pt,scale=0.7]

    \draw (2,0)  node (m){$2$};
    \draw (4,0)  node (l1){$3$};
    \draw (1,1)  node (l2){$1$};
    
    \draw (m) --  (i1) ;
    \draw (i1) --  (l1) ;
    \draw (i1) --  (i2) ;
    \draw (i2) --  (l2) ;
\end{scope}

\begin{scope}[xshift=6.5cm]

\tikzstyle{every node}=[fill, draw,inner sep=2pt,scale=0.8]
    \draw [color=blue] (3,1)  node (i1){};
    \draw [circle,color=red] (2,2)  node (i2){};

\tikzstyle{every node}=[inner sep=1pt, minimum width=14pt,scale=0.7]

    \draw (2,0)  node (m){$2$};
    \draw (4,0)  node (l1){$3$};
    \draw (1,1)  node (l2){$1$};
    
    \draw (m) --  (i1) ;
    \draw (i1) --  (l1) ;
    \draw (i1) --  (i2) ;
    \draw (i2) --  (l2) ;
\end{scope}

\begin{scope}[xshift=11cm]
\tikzstyle{every node}=[fill, draw,inner sep=2pt,scale=0.8]
    \draw [circle,color=red] (1,1)  node (i1){};
    \draw [color=blue] (2,2)  node (i2){};

\tikzstyle{every node}=[inner sep=1pt, minimum width=14pt,scale=0.7]

    \draw (0,0)  node (m){$1$};
    \draw (2,0)  node (l1){$2$};
    \draw (3,1)  node (l2){$3$};

    \draw (m) --  (i1) ;
    \draw (i1) --  (l1) ;
    \draw (i1) --  (i2) ;
    \draw (i2) --  (l2) ;

\end{scope}

\begin{scope}[xshift=6.5cm,yshift=-2.5cm]
\tikzstyle{every node}=[fill, draw,inner sep=2pt,scale=0.8]
    \draw [circle,color=red] (1,1)  node (i1){};
    \draw [color=blue] (2,2)  node (i2){};

\tikzstyle{every node}=[inner sep=1pt, minimum width=14pt,scale=0.7]

    \draw (0,0)  node (m){$1$};
    \draw (2,0)  node (l1){$3$};
    \draw (3,1)  node (l2){$2$};

    \draw (m) --  (i1) ;
    \draw (i1) --  (l1) ;
    \draw (i1) --  (i2) ;
    \draw (i2) --  (l2) ;
\end{scope}

\begin{scope}[xshift=9.5cm,yshift=-2.5cm]
\tikzstyle{every node}=[fill, draw,inner sep=2pt,scale=0.8]
    \draw [color=blue] (1,1)  node (i1){};
    \draw [circle,color=red] (2,2)  node (i2){};

\tikzstyle{every node}=[inner sep=1pt, minimum width=14pt,scale=0.7]

    \draw (0,0)  node (m){$1$};
    \draw (2,0)  node (l1){$3$};
    \draw (3,1)  node (l2){$2$};

    \draw (m) --  (i1) ;
    \draw (i1) --  (l1) ;
    \draw (i1) --  (i2) ;
    \draw (i2) --  (l2) ;
\end{scope}
\begin{scope}[xshift=15.5cm]
\tikzstyle{every node}=[fill, draw,inner sep=2pt,scale=0.8]
    \draw [circle,color=red] (1,1)  node (i1){};
    \draw [circle,color=red] (2,2)  node (i2){};

\tikzstyle{every node}=[inner sep=1pt, minimum width=14pt,scale=0.7]

    \draw (0,0)  node (m){$1$};
    \draw (2,0)  node (l1){$3$};
    \draw (3,1)  node (l2){$2$};
    
    \draw (m) --  (i1) ;
    \draw (i1) --  (l1) ;
    \draw (i1) --  (i2) ;
    \draw (i2) --  (l2) ;
\end{scope}

\begin{scope}[xshift=18cm]

\tikzstyle{every node}=[fill, draw,inner sep=2pt,scale=0.8]
    \draw [circle,color=red] (3,1)  node (i1){};
    \draw [circle,color=red] (2,2)  node (i2){};

\tikzstyle{every node}=[inner sep=1pt, minimum width=14pt,scale=0.7]

    \draw (2,0)  node (m){$2$};
    \draw (4,0)  node (l1){$3$};
    \draw (1,1)  node (l2){$1$};
    
    \draw (m) --  (i1) ;
    \draw (i1) --  (l1) ;
    \draw (i1) --  (i2) ;
    \draw (i2) --  (l2) ;
\end{scope}
\end{tikzpicture}
 \caption{Set of bicolored Lyndon trees on $[3]$}
 \label{figure:bicoloredlyndontreesn3}
\end{figure}
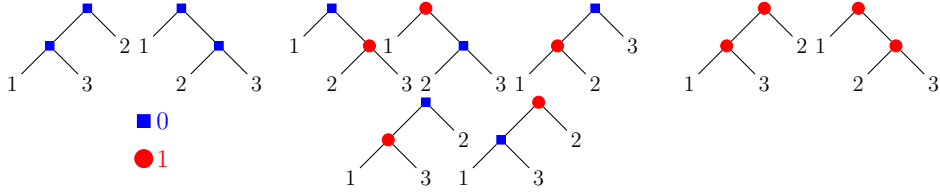

\begin{theorem}[{\cite[Section 5]{DleonWachs2013}}]
For every $n \ge 1$ and $i \in 
\{0,\cdots, n-1\}$,
 \begin{align*}
  \dim \lie_2(n,i)=|\lyn_{n,i}|.
 \end{align*}
 Hence, 
 \begin{align*}
  |\lyn_{n,i}|=|\T_{n,i}|.
 \end{align*}
\end{theorem}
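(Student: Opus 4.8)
The statement has two parts, and the second is immediate from the first: once $\dim \lie_2(n,i) = |\lyn_{n,i}|$ is established, combining it with the theorem of Liu \cite{Liu2010} that $\dim \lie_2(n,i) = |\T_{n,i}|$ gives $|\lyn_{n,i}| = |\T_{n,i}|$ at once. So the plan is to concentrate on the dimension equality. Since the companion identity $\dim \lie_2(n,i) = |\comb_{n,i}|$ is already in hand, it in fact suffices to prove the purely combinatorial statement $|\lyn_{n,i}| = |\comb_{n,i}|$, or equivalently, mirroring the treatment of combs, to show that the bicolored Lyndon trees refine $T_n(t)$ in exactly the same way the bicolored combs do.

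First I would set up the Lyndon analogue of the toggling mechanism used for combs. The coloring condition (L) forces $\clr(x)=0$ at every non-Lyndon node $x$ and $\clr(L(x))=1$ at its left child. As in the comb case, this immediately shows that the underlying uncolored tree $\widetilde T$ of any $T \in \lyn_n$ lies in $\ndnl_n$: a double non-Lyndon node $x$ would be non-Lyndon, forcing $\clr(x)=0$, while simultaneously being the left child of its non-Lyndon parent, forcing $\clr(x)=\clr(L(\p(x)))=1$, a contradiction. The forced-$0$ nodes are exactly the non-Lyndon nodes, and the forced-$1$ nodes are exactly their left children; the assignment $x \mapsto L(x)$ is injective and lands among the internal nodes (a non-Lyndon node has an internal left child, since otherwise the color $\clr(L(x))$ demanded by (L) could not be assigned), so there are $\nlyn(\widetilde T)$ nodes of each kind. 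Because $\widetilde T \in \ndnl_n$ these two sets are disjoint, leaving $m:=n-1-2\,\nlyn(\widetilde T)$ free internal nodes. A free $(\ZZ_2)^{m}$-action toggling the colors of these nodes then yields, exactly parallel to the comb computation,
\begin{align*}
\sum_{i=0}^{n-1}|\lyn_{n,i}|\,t^i
=\sum_{\mathfrak{T}\in\ndnl_n} t^{\nlyn(\mathfrak{T})}(1+t)^{n-1-2\,\nlyn(\mathfrak{T})}.
\end{align*}

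It then remains to identify the right-hand side with $T_n(t)$. By the computation already carried out for combs we have
\begin{align*}
T_n(t)=\sum_{\mathfrak{T}\in\ndrd_n} t^{\rdes(\mathfrak{T})}(1+t)^{n-1-2\,\rdes(\mathfrak{T})},
\end{align*}
so the task reduces to the equidistribution
\begin{align*}
|\{T\in\ndnl_n \mid \nlyn(T)=j\}| = |\{T\in\ndrd_n \mid \rdes(T)=j\}|
\end{align*}
for every $j$. This equidistribution is the step I expect to be the main obstacle: conditions (L) and (C) single out genuinely different families of normalized trees, and a statistic-preserving bijection $\ndnl_n \to \ndrd_n$ is not apparent and appears to be of roughly the same difficulty as the explicit bijection $\comb_{n,i}\to\T_{n,i}$ left open above. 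I would therefore either try to manufacture such a bijection from the classical unitriangular change of basis between the comb (Lazard) basis and the Lyndon basis of the free Lie algebra, refined compatibly with the bracket grading, or, to obtain the result unconditionally, invoke the poset-topology argument of \cite{DleonWachs2013}: there one exhibits a basis of the cohomology of the maximal intervals of the poset of weighted partitions indexed by the bicolored Lyndon trees (just as one is indexed by the bicolored combs), with the color-$1$ count matching the weight grading, giving $\dim\lie_2(n,i)=|\lyn_{n,i}|$ directly. Granting this, the ``Hence'' follows immediately from Liu's theorem, as noted above.
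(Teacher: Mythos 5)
Your proposal is correct and ultimately takes the same route as the paper: the paper offers no internal proof of the first equality but imports it from \cite{DleonWachs2013} (where bicolored Lyndon trees index a basis of the relevant cohomology, graded compatibly with the color-$1$ count), and obtains the ``Hence'' by combining with Liu's theorem $\dim\lie_2(n,i)=|\T_{n,i}|$, exactly as you do. Your exploratory reduction to the equidistribution of $\nlyn$ on $\ndnl_n$ with $\rdes$ on $\ndrd_n$ is a sound observation (and your toggling analysis of condition (L) is the same argument the paper later reuses for its Theorem on $\ndnl_n$), but you are right to flag that it does not yield an independent proof and to fall back on the cited poset-topology result.
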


The proof of the following theorem follows the same arguments of the proof of Theorem 
\ref{theorem:gammacoefficientsrightdescents}.

\begin{theorem}
For $n\ge 1$ and $j \in \{0,1,\cdots,\lfloor  \frac{n-1}{2} \rfloor\}$,
\begin{align*}
 \gamma_j(T_n(t)) = |\{T \in \ndnl_n\,\mid\, \nlyn(T)=j\}|.
\end{align*}
\end{theorem}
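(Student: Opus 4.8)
The plan is to mirror the proof of Theorem~\ref{theorem:gammacoefficientsrightdescents}, replacing the pair (right descent, parent of a right descent) by the pair (non-Lyndon node, left child of a non-Lyndon node), and replacing Corollary~\ref{corollary:comb_tree_equicardinal} by the identity $|\lyn_{n,i}|=|\T_{n,i}|$ coming from the preceding theorem. First I would observe that the underlying uncolored tree $\widetilde T$ of any bicolored Lyndon tree $T\in\lyn_n$ lies in $\ndnl_n$: if $\widetilde T$ contained a double non-Lyndon node $x$, then condition (L) applied to $x$ would force $\clr(x)=0$, while condition (L) applied to its non-Lyndon parent $\p(x)$ would force $\clr(L(\p(x)))=\clr(x)=1$, a contradiction. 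Conversely, every $\mathfrak{T}\in\ndnl_n$ is the underlying tree of at least one bicolored Lyndon tree, since on $\ndnl_n$ the constraints imposed by (L) are mutually consistent.

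Next I would set up the counting identity analogous to $\free(T)+2\rdes(T)=n-1$. For $\mathfrak{T}\in\ndnl_n$, let $\free(\mathfrak{T})$ denote the number of internal nodes whose color is left unconstrained by (L), namely the Lyndon internal nodes that are not the left child of a non-Lyndon node. The claim is that
\[
\free(\mathfrak{T}) + 2\,\nlyn(\mathfrak{T}) = n - 1 .
\]
To prove this I would partition the $n-1$ internal nodes into the $\nlyn(\mathfrak{T})$ non-Lyndon nodes, the nodes of the form $L(x)$ for $x$ non-Lyndon, and the remaining free nodes. The map $x\mapsto L(x)$ is injective, and here one must use two structural facts: that $L(x)$ is internal whenever $x$ is non-Lyndon (a node with a leaf as its left child is Lyndon, since the Lyndon condition \eqref{equation:lyndoncondition} is evaluable only when $L(x)$ is internal), and that $L(x)$ is itself Lyndon in $\ndnl_n$ (otherwise $L(x)$ would be a double non-Lyndon node). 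These two facts guarantee that the three classes are disjoint and exhaust all internal nodes, giving the identity.

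With this in hand, the generating-function computation is a verbatim copy of the earlier one. Over the bicolored Lyndon trees with fixed underlying tree $\mathfrak{T}\in\ndnl_n$, condition (L) forces the $\nlyn(\mathfrak{T})$ non-Lyndon nodes to color $0$ and their left children to color $1$, while each of the $\free(\mathfrak{T})$ free nodes may be colored $0$ or $1$ independently, contributing a factor $(1+t)$ apiece. Hence
\[
\sum_{\substack{T \in \lyn_n \\ \widetilde T = \mathfrak{T}}} t^{|\{x \text{ internal}\,:\, \clr(x) = 1\}|} = t^{\nlyn(\mathfrak{T})}(1+t)^{\free(\mathfrak{T})} = t^{\nlyn(\mathfrak{T})}(1+t)^{\,n-1-2\nlyn(\mathfrak{T})} .
\]
Summing over $\mathfrak{T}\in\ndnl_n$ and using $|\lyn_{n,i}|=|\T_{n,i}|$ to write $T_n(t)=\sum_{T\in\lyn_n} t^{|\{x\,:\,\clr(x)=1\}|}$ produces the $\gamma$-basis expansion of $T_n(t)$, whence $\gamma_j(T_n(t))=|\{\mathfrak{T}\in\ndnl_n\,\mid\,\nlyn(\mathfrak{T})=j\}|$.

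I expect the only real obstacle to be the structural bookkeeping behind the identity $\free+2\,\nlyn=n-1$: namely confirming that every non-Lyndon node has an internal left child and that, within $\ndnl_n$, these left children are distinct Lyndon nodes disjoint from the non-Lyndon nodes themselves. Once this is settled, the $\gamma$-positivity argument is identical to the comb case, and no new idea is required.
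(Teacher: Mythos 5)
Your proposal is correct and takes essentially the same approach as the paper, which in fact only states that the proof ``follows the same arguments'' as the proof of Theorem~\ref{theorem:gammacoefficientsrightdescents}. You have correctly filled in the implicit details: the verification that double non-Lyndon nodes are incompatible with condition (L), the identity $\free(\mathfrak{T})+2\nlyn(\mathfrak{T})=n-1$ via the partition of internal nodes, and the substitution of $|\lyn_{n,i}|=|\T_{n,i}|$ for Corollary~\ref{corollary:comb_tree_equicardinal}.
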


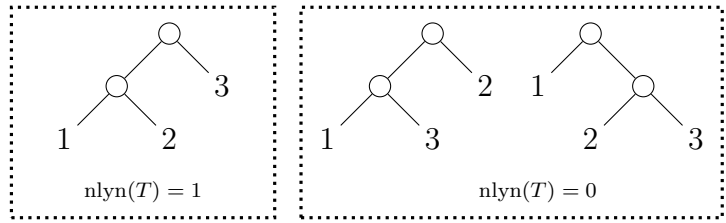
\begin{figure}[ht]
  \begin{tikzpicture}[scale=0.7]

\draw[dotted, very thick] (-1,-2.5) -- (-1,1.5) -- (4,1.5) --(4,-2.5)-- cycle;
\draw[dotted, very thick] (4.5,-2.5) -- (4.5,1.5) -- (12.5,1.5) --(12.5,-2.5)-- cycle;
\draw (1.5,-2) node {\tiny $\nlyn(T)=1$};
\draw (9,-2) node {\tiny $\nlyn(T)=0$};

\begin{scope}[xshift=0cm, yshift=-1cm]

\tikzstyle{every node}=[draw,inner sep=1mm,scale=1]
    \draw [circle] (1,1)  node (i1){$$};
    \draw [circle] (2,2)  node (i2){$$};

\tikzstyle{every node}=[inner sep=1pt, minimum width=14pt,scale=1]

    \draw (0,0)  node (m){$1$};
    \draw (2,0)  node (l1){$2$};
    \draw (3,1)  node (l2){$3$};

    \draw (m) --  (i1) ;
    \draw (i1) --  (l1) ;
    \draw (i1) --  (i2) ;
    \draw (i2) --  (l2) ;
\end{scope}

\begin{scope}[xshift=5cm,yshift=-1cm]

\tikzstyle{every node}=[draw,inner sep=1mm,scale=1]
    \draw [circle] (1,1)  node (i1){$$};
    \draw [circle] (2,2)  node (i2){$$};

\tikzstyle{every node}=[inner sep=1pt, minimum width=14pt,scale=1]

    \draw (0,0)  node (m){$1$};
    \draw (2,0)  node (l1){$3$};
    \draw (3,1)  node (l2){$2$};

    \draw (m) --  (i1) ;
    \draw (i1) --  (l1) ;
    \draw (i1) --  (i2) ;
    \draw (i2) --  (l2) ;
\end{scope}
\begin{scope}[xshift=8cm,yshift=-1cm]

\tikzstyle{every node}=[draw,inner sep=1mm,scale=1]
    \draw [circle] (3,1)  node (i1){$$};
    \draw [circle] (2,2)  node (i2){$$};

\tikzstyle{every node}=[inner sep=1pt, minimum width=14pt,scale=1]

    \draw (2,0)  node (m){$2$};
    \draw (4,0)  node (l1){$3$};
    \draw (1,1)  node (l2){$1$};

    \draw (m) --  (i1) ;
    \draw (i1) --  (l1) ;
    \draw (i1) --  (i2) ;
    \draw (i2) --  (l2) ;
\end{scope}

\end{tikzpicture}
 \caption{Set of normalized trees in $\ndnl_3=\nor_3$}
 \label{figure:normalizedlyndontypen3}
\end{figure}

\section{Combinatorial interpretation in terms of Stirling 
permutations}\label{section:stirlingpermutations}

Consider now the set of multipermutations of the multiset $\{1,1,2,2,\allowbreak\cdots,n,n\}$ such 
that all 
numbers between the two occurrences of any number $m$ are larger than $m$.
To this family belongs 
for example the permutation $12234431$ but not $11322344$ since $2$ is less than $3$ and $2$ is 
between the two occurrences of $3$. This family (denoted $\Q_n$) of permutations was introduced by 
Gessel and Stanley in \cite{StanleyGessel1978} and the permutations in $\Q_n$ are known as 
\emph{Stirling Permutations}. 

For a permutation $\theta=\theta_1 \theta_2 \dots \theta_{2n}$ in $\Q_n$ we say that the position 
$i$ contains a \emph{first occurrence} of a letter if $\theta_j\neq\theta_i$ for all $j < i$, 
otherwise we say that it contains a \emph{second occurrence}.
An \emph{ascending adjacent pair} in $\theta$ is a pair $(a,b)$ such that $a 
< b$ and in $\theta$ the second occurrence of $a$ is the immediate predecessor of the first 
occurrence of $b$.
An \emph{ascending adjacent sequence (of length $2$)} is a sequence $a<b<c$ such that $(a,b)$ and 
$(b,c)$ are both ascending adjacent pairs. For example, in 
$\theta=13344155688776$ the ascending adjacent pairs are $(1,5),\,(5,6)$ and $(3,4)$ but the only 
ascending adjacent sequence is $1<5<6$. We denote $\naas_n$ the 
set of all Stirling permutations in 
$\Q_n$ that do not contain ascending adjacent sequences.
Similarly, a \emph{terminally nested pair} in $\theta$ is a pair $(a,b)$ such that $a 
< b$ and in $\theta$ the second occurrence of $a$ is the immediate successor of the second 
occurrence of $b$.
A \emph{terminally nested sequence (of length $2$)} is a sequence $a<b<c$ such that $(a,b)$ and 
$(b,c)$ are both terminally nested pairs. For example, in 
$\theta=13443566518877$ the terminally nested pairs are $(1,5),\,(5,6)$ and $(3,4)$ but the only 
terminally nested sequence is $1<5<6$. We denote $\ntns_n$ the 
set of all Stirling permutations in $\Q_n$ that do not contain terminally nested sequences.
For $\sigma \in \Q_n$, we denote $\aapair(\sigma)$ the number of ascending adjacent pairs in 
$\sigma$ and $\tnpair(\sigma)$ the number of terminally nested pairs in $\sigma$.

The following result in \cite{Dleon2014} relates the statistics above in $\Q_{n-1}$ with the ones 
previously discussed for $\nor_n$.
\begin{proposition}[{\cite[Proposition 4.8]{Dleon2014}}]
 There is a bijection $\phi: \nor_n \rightarrow \Q_{n-1}$ such that for every $T \in \nor_n$,
 \begin{enumerate}
  \item $\rdes(T)$=$\tnpair(\phi(T))$
  \item $\nlyn(T)$=$\aapair(\phi(T))$
  \item $\phi(\ndrd_n)=\naas_{n-1}$
  \item $\phi(\ndnl_n)=\ntns_{n-1}$.
 \end{enumerate}
\end{proposition}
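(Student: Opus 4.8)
The plan is to realize $\phi$ as the composition of two classical correspondences and then read off all four statements from a local, node-by-node dictionary. First I would convert a normalized tree into an increasing plane tree. For $T\in\nor_n$, recall that the valency satisfies $v(x)=v(L(x))$ and $v(R(x))>v(x)$ at every internal node, so the assignment $x\mapsto v(R(x))$ is a bijection from the $n-1$ internal nodes of $T$ onto $\{2,\dots,n\}$: walking up from a leaf $\ell\ge 2$ to the first vertex that is a right child exhibits the unique $x$ with $v(R(x))=\ell$. Relabeling by $x\mapsto v(R(x))-1$ then labels the internal nodes bijectively by $[n-1]$. I would reorganize these nodes into a plane tree $\tau(T)$ by the rule ``$R(x)$ becomes the last child of $x$, and $L(x)$ becomes the sibling immediately to the left of $x$'', adjoining a virtual root. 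A one-line computation with $v$ (each child label strictly exceeds its parent's) shows $\tau(T)$ is increasing, so the standard bijection between increasing plane trees and Stirling permutations — record each label when a depth-first search enters and when it leaves the block it governs — produces $\phi(T)\in\Q_{n-1}$.

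Bijectivity is then essentially free, since each arrow is invertible. Given $\sigma\in\Q_{n-1}$, the Stirling condition makes the family of arches $\{(\text{first }m,\text{second }m)\}$ laminar with nesting strictly increasing the value, so $\sigma$ reconstructs an increasing plane tree; undoing the first-child/left-sibling rule recovers a normalized tree, and the valency labels are read off the vertex labels. I would write these inverses out once to certify that $\phi$ is a bijection $\nor_n\to\Q_{n-1}$; this is routine bookkeeping once the forward map is fixed.

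For the statistic identities I would set up the local dictionary on $\tau(T)$. The children of a vertex are exactly the left spine hanging from its $R$-child, listed so that $R(x)$ is the \emph{last} child; hence $x=R(\p(x))$ (a right descent) holds precisely when $x$ is the last child of a genuine vertex, which is exactly the event that $x$'s arch closes immediately before its parent's, i.e.\ a terminally nested pair. This yields $\rdes(T)=\tnpair(\phi(T))$, which is part $(1)$. Dually, two consecutive siblings $L(w),w$ carry labels $v(R(L(w)))$ and $v(R(w))$, and this adjacency is ascending exactly when the Lyndon inequality $(\ref{equation:lyndoncondition})$ \emph{fails} at $w$; since an ascending pair of consecutive siblings is precisely the pattern ``second occurrence of the smaller immediately precedes the first occurrence of the larger'', this gives $\nlyn(T)=\aapair(\phi(T))$, part $(2)$. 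Parts $(3)$ and $(4)$ I would obtain by promoting this dictionary from single nodes to length-two patterns: a double right descent is a stack $x=R(u),\ u=R(w)$, and a double non-Lyndon node is a non-Lyndon left child of a non-Lyndon node; under $\phi$ each becomes a length-two chain of the associated adjacent-pair statistic (two adjacent pairs sharing a value). Thus forbidding the tree pattern is equivalent to forbidding the matching sequence, and $\phi$ restricts to the claimed bijections $\phi(\ndrd_n)=\naas_{n-1}$ and $\phi(\ndnl_n)=\ntns_{n-1}$.

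The main obstacle is exactly this last promotion. The three maps $x\mapsto v(R(x))$, the first-child/left-sibling rule, and the entry/exit reading each silently trade left for right and first occurrence for second occurrence, so the delicate point is to verify that the \emph{double} conditions on $\nor_n$ are matched with the correct forbidden family on $\Q_{n-1}$ — an ascending-adjacent versus a terminally-nested sequence — rather than their mirror images; it is very easy to land on the transposed pairing here. I would pin this down by tracking a single right spine and a single left spine of $\tau(T)$ through all three maps, verifying the occurrence pattern of the resulting word explicitly, and then arguing the general case from the recursive structure of $\tau$.
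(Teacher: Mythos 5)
First, a caveat about the comparison you asked for: this note does not actually prove the proposition --- it is imported verbatim from \cite{Dleon2014} --- so there is no in-paper argument to measure your proposal against, and I am judging it on its own terms. Your construction of $\phi$ is sound and natural: the map $x\mapsto v(R(x))$ is indeed a bijection from the internal nodes onto $\{2,\dots,n\}$ (your ``walk up to the first right-child edge'' argument is correct), the rotation ``$R(x)$ becomes the last child of $x$, $L(x)$ becomes the sibling just left of $x$'' produces an increasing plane tree because every child of $y$ lies on the left spine of $R(y)$ and hence has valency $v(R(y))$, and the depth-first reading then lands in $\Q_{n-1}$. Your local dictionary is also correct: a terminally nested pair $(a,b)$ occurs exactly when $b$ is the last child of a non-root vertex, i.e.\ $b=R(a)$ with $b$ internal, giving item $(1)$; an ascending adjacent pair $(a,b)$ occurs exactly when $b$ is the sibling immediately to the right of $a$ with $a<b$, i.e.\ $a=L(b)$ and $v(R(L(b)))<v(R(b))$, giving item $(2)$. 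Bijectivity is, as you say, routine once this is set up.

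The gap is in your last step, and it is precisely the one you warned yourself about and then did not resolve. Promoting your own dictionary to length-two patterns forces: a double right descent $x=R(u)$, $u=R(w)$ is a chain of two last-child relations, hence a \emph{terminally nested} sequence; a double non-Lyndon node is two consecutive ascending sibling adjacencies, hence an \emph{ascending adjacent} sequence. So the map you built satisfies $\phi(\ndrd_n)=\ntns_{n-1}$ and $\phi(\ndnl_n)=\naas_{n-1}$, which is the transpose of items $(3)$ and $(4)$ as printed, yet your closing sentence asserts the printed pairing with no supporting computation. Concretely, for $n=4$ the normalized right comb ($R(w)=u$, $R(u)=x$, leaves $1,2,3,4$ left to right) has a double right descent and is sent by your $\phi$ to $123321$, which lies in $\naas_{3}$ but not in $\ntns_{3}$; this refutes $\phi(\ndrd_4)=\naas_3$ for your map. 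You should be aware that the transposed pairing is exactly what is needed to deduce the Corollary of Section \ref{section:stirlingpermutations} from Theorem \ref{theorem:gammacoefficientsrightdescents} and its $\nlyn$ analogue, so items $(3)$ and $(4)$ of the printed statement are very likely interchanged and should be checked against \cite{Dleon2014}; but as a proof of the statement as printed, your argument does not close. You must either exhibit a genuinely different $\phi$ realizing the crossed pairing (while keeping $(1)$ and $(2)$), or state and prove the corrected version.
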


\begin{corollary}
For $n\ge 1$ and $j \in \{0,1,\cdots,\lfloor  \frac{n-1}{2} \rfloor\}$,
\begin{align*}
 \gamma_j(T_n(t)) =& |\{T \in \ntns_{n-1}\,\mid\, \tnpair(T)=j\}|\\
 =&|\{T \in \naas_{n-1}\,\mid\, \aapair(T)=j\}|.
\end{align*}
\end{corollary}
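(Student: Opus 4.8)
The plan is to read the statement off the two tree-theoretic descriptions of $\gamma_j(T_n(t))$ already in hand, by pushing them forward along the bijection $\phi$ of the Proposition. Recall that Theorem~\ref{theorem:gammacoefficientsrightdescents} expresses $\gamma_j(T_n(t))$ as $|\{T \in \ndrd_n \mid \rdes(T)=j\}|$, and that its Lyndon-type counterpart expresses the same coefficient as $|\{T \in \ndnl_n \mid \nlyn(T)=j\}|$. Since $\phi\colon \nor_n \to \Q_{n-1}$ is a bijection, it restricts to a bijection between any subfamily of $\nor_n$ and its image, and by parts~(1) and~(2) of the Proposition it carries the statistics $\rdes$ and $\nlyn$ to $\tnpair$ and $\aapair$ respectively. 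Hence each of the two tree counts equals a corresponding count of Stirling permutations in $\Q_{n-1}$.

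First I would transport the right-descent description. Because $\phi$ matches $\rdes$ with $\tnpair$, it identifies the trees with no double right descent with the Stirling permutations with no terminally nested sequence, so $\phi$ restricts to a bijection $\ndrd_n \to \ntns_{n-1}$ under which $\rdes$ becomes $\tnpair$; this turns the first description into $\gamma_j(T_n(t)) = |\{\sigma \in \ntns_{n-1} \mid \tnpair(\sigma)=j\}|$. Transporting the Lyndon-type description in exactly the same way, now using that $\phi$ matches $\nlyn$ with $\aapair$ and hence identifies $\ndnl_n$ with $\naas_{n-1}$, yields $\gamma_j(T_n(t)) = |\{\sigma \in \naas_{n-1} \mid \aapair(\sigma)=j\}|$. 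The two right-hand sides are therefore equal, which is precisely the asserted chain of identities.

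The only point needing genuine care---everything else being a quotation of the Proposition and of Theorem~\ref{theorem:gammacoefficientsrightdescents}---is the bookkeeping that pairs each forbidden tree configuration with the correct forbidden permutation configuration. Under $\phi$ a right descent becomes a terminally nested pair, so a \emph{double} right descent (two stacked right descents) becomes a terminally nested sequence of length~$2$; forbidding the former is exactly forbidding the latter, which is why $\phi$ carries $\ndrd_n$ onto $\ntns_{n-1}$, and symmetrically $\ndnl_n$ onto $\naas_{n-1}$. Once these matchings are set up as in the four parts of the Proposition, no computation remains.
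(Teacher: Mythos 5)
Your proof is correct and is exactly the argument the paper intends: the Corollary carries no proof of its own and is meant to follow by transporting the two tree-theoretic descriptions of $\gamma_j(T_n(t))$ (Theorem \ref{theorem:gammacoefficientsrightdescents} and its Lyndon-type analogue) along the bijection $\phi$, which is what you do. One caveat worth flagging: you use $\phi(\ndrd_n)=\ntns_{n-1}$ and $\phi(\ndnl_n)=\naas_{n-1}$, whereas parts (3)--(4) of the Proposition as printed assert the opposite pairing, $\phi(\ndrd_n)=\naas_{n-1}$ and $\phi(\ndnl_n)=\ntns_{n-1}$. Taken literally, the printed (3)--(4) combined with (1)--(2) would yield $\gamma_j(T_n(t))=|\{\sigma\in\naas_{n-1}\mid\tnpair(\sigma)=j\}|=|\{\sigma\in\ntns_{n-1}\mid\aapair(\sigma)=j\}|$, which is not the Corollary; your pairing is the one forced by parts (1)--(2) together with the definitions (under $\phi$ a right descent becomes a terminally nested pair, and the parent--child chaining of right descents becomes the chaining of terminally nested pairs, so double right descents correspond to terminally nested sequences, and symmetrically for non-Lyndon nodes and ascending adjacent pairs), and it is the only one from which the Corollary as stated follows. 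So items (3) and (4) of the Proposition appear to be transposed in the source; this is a typo in the paper rather than a gap in your argument.
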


\begin{example}The Stirling permutations in $\Q_2$ are $1122$, $1221$ and $2211$. In this 
particular case $\Q_2=\naas_2=\ntns_2$ and the statistics in Table 
\ref{table:stirlingpermutationsn2} imply that $\gamma_0=2$ and $\gamma_1=1$ are the 
$\gamma$-coefficients of the polynomial $\T_3(t)$.
\begin{table}[ht]

 \begin{center}
\begin{tabular}{|c|c|c|}\hline
\textbf{$\sigma$} & \textbf{$\tnpair$} & \textbf{$\aapair$}\\\hline
$1122$ & 0 & 1\\\hline
$1221$ & 1 & 0\\\hline
$2211$ & 0 & 0\\\hline
 \end{tabular}
 \end{center}
\caption{$\tnpair$ and $\aapair$ statistics in $\Q_2$.}
\label{table:stirlingpermutationsn2}
\end{table}
\end{example}

\section{A comment about $\gamma$-positivity and $e$-positivity}\label{section:epositivity}
Let $\xx= 
x_1,x_2,\dots$ be an infinite set of variables and $\Lambda=\Lambda_{\QQ}$ the ring of symmetric 
functions with rational coefficients on the variables $\xx$, that is, the ring of power series on 
$\xx$ of bounded degree that are invariant under permutation of the variables.

Define $e_0:=1$, for $n\ge 1$
\begin{align*}
 e_{n}:=\sum_{1\le i_1<i_2<\cdots<i_n}x_{i_1}x_{i_2}\cdots x_{i_n},
\end{align*}
 and for an integer partition $\lambda=(\lambda_{1}\ge \lambda_{2}\ge 
\cdots)$ (i.e., a weakly decreasing finite sequence of positive integers) 
$e_{\lambda}:=\prod_i e_{\lambda_i}$. We call 
$e_{\lambda}$ the \emph{elementary symmetric function} corresponding to the partition $\lambda$.

It is known that for $n \ge 0 $, the set $\{e_{\lambda}\,\mid\, \lambda \vdash n\}$ is a basis 
for the $n$-th homogeneous graded component of $\Lambda$, where the grading is with respect 
to degree. See \cite{Macdonald1995} and \cite{Stanley1999} for more information about symmetric 
functions.

Note that if we make the specialization $x_i \mapsto 0$ in $\Lambda$ for all 
$i\ge3$ then $e_1  \mapsto x_1+x_2$, $e_2 \mapsto x_1x_2$ and $e_i \mapsto 0$ for all $i\ge3$. 
Thus 
for a partition $\lambda$ of $n$ (i.e., $\sum_{i}\lambda_i=n$) the symmetric function $e_{\lambda} 
\mapsto 0$ unless 
$\lambda=(2^{j},1^{n-2j})$ for some $j\in \NN$. In that case,
\[
 e_{(2^{j},1^{n-2j})}\mapsto (x_1x_2)^j(x_1+x_2)^{n-2j}.
\]
If we further replace $x_1\mapsto 1$ and $x_2\mapsto t$ we obtain
\[
 e_{(2^{j},1^{n-2j})}\mapsto t^j(1+t)^{n-2j}.
\]
In other words, the elementary basis in two variables is equivalent to the $\gamma$ basis.
A consequence of this observation is that another possible approach to conclude the 
$\gamma$-positivity of a palindromic polynomial $f(t)$ is to find an  $e$-positive symmetric 
function $F(x_1,x_2,\dots)$ such that $f(t)=F(1,t,0,0,\dots)$.

\subsection{Colored combs and comb type of a normalized tree} A \emph{colored comb} is a  
normalized binary tree $T$ together with a function $\clr$ that assigns positive integers in $\PP$ 
to the internal nodes of $T$ and  that satisfies the following
coloring restriction:  for each internal node $x$ whose right child $R(x)$ is not a leaf, 
\begin{align}\label{equation:combcondition}
 \clr(x)>\clr(R(x)).
\end{align}

Note that the set of colored combs that only use the colors $1$ and $2$ are the same as 
the bicolored combs defined in Section \ref{section:binarytrees}. We denote $\mcomb_n$ the set of 
colored combs with $n$ leaves. Figure \ref{fig:combtype} shows an 
example of a colored comb. 

\begin{figure}[ht]
        \centering
        \usetikzlibrary{shapes,snakes}

\begin{tikzpicture}[thick,scale=0.8]

\begin{scope}[xshift=0cm,yshift=1cm]
 \draw [color=blue] (1.5,4)  node (blue){$1$};
\draw [circle,color=red] (1.5,3.5)  node (red){$2$};
 \draw [color=brown] (1.5,2.9)  node (brown){$3$};
\tikzstyle{every node}=[fill, draw,inner sep=4pt, minimum width=1pt,scale=0.8]
    
   \draw [color=blue] (1,4)  node (b){};
\draw [circle,color=red] (1,3.5)  node (r){};
    \draw [diamond,color=brown] (1,2.9)  node (g){};
\end{scope}

\node[fill=blue!20,blue!20,draw,rectangle,rounded corners,rotate=-45, minimum width=70pt,minimum 
height=30pt,scale=0.8] at  (4.5,2.4) {};
\node[fill=blue!20,blue!20,draw,rectangle,rounded corners,rotate=-22, minimum width=100pt,minimum 
height=30pt,scale=0.8] at  (7.25,3.5) {};
height=30pt,scale=0.8] at  (7.5,2) {};
\node[fill=blue!20,blue!20,draw,rectangle,rounded corners,rotate=45, minimum width=30pt,minimum 
height=30pt,scale=0.8] at  (2,1) {};
\node[fill=blue!20,blue!20,draw,rectangle,rounded corners,rotate=45, minimum width=30pt,minimum 
height=30pt,scale=0.8] at  (3,2) {};
\node[fill=blue!20,blue!20,draw,rectangle,rounded corners,rotate=45, minimum width=30pt,minimum 
height=30pt,scale=0.8] at  (6.5,1) {};
\node[fill=blue!20,blue!20,draw,rectangle,rounded corners,rotate=45, minimum width=30pt,minimum 
height=30pt,scale=0.8] at  (7.5,2) {};
\tikzstyle{every node}=[fill,draw,inner sep=0pt, minimum width=1 pt, scale=0.8]

    \draw [circle,color=red] (6,4)  node (i1){n};
    \draw [color=blue]  (8.5,3)  node (i2){N};
    \draw [circle,color=red]  (7.5,2)  node (i3){n};
    \draw [diamond,color=brown]  (6.5,1)  node (i4){n};
    \draw[diamond, color=brown]  (4,3)  node (i5){n};
    \draw [circle,color=red]  (5,2)  node (i6){n};
    \draw [diamond,color=brown] (3,2)  node (i7){n};
    \draw[color=blue]  (2,1)  node (i8){N};
\tikzstyle{every node}=[inner sep=1pt, minimum width=14pt,scale=0.8]

    \draw (4.3,1)  node (l1){2};
    \draw (5.5,0)  node (l2){3};
    \draw (1,0)  node (l3){1};
    \draw (3,0)  node (l4){4};
    \draw (6,1)  node (l5){5};
    \draw (3.7,1)  node (l6){6};
    \draw (7.5,0)  node (l7){7};
    \draw (9.5,2)  node (l8){9};
    \draw (8.5,1)  node (l9){8};

    \draw (i1) --  (i2) ;
    \draw (i1) --  (i5) ;
    \draw (i2) --  (i3) ;
    \draw (i2) --  (l8) ;
    \draw (i3) --  (i4) ;
    \draw (i3) --  (l9) ;
    \draw (i4) --  (l7) ;
    \draw (i4) --  (l2) ;
    \draw (i5) --  (i6) ;
    
    \draw (i5) --  (i7) ;
    \draw (i6) --  (l5) ;
    \draw (i6) --  (l1) ;
    \draw (i7) --  (l6) ;
    \draw (i7) --  (i8) ;
    \draw (i8) --  (l3) ;
    \draw (i8) --  (l4) ;
\end{tikzpicture}
 \caption{Example of a colored comb of comb type $(2,2,1,1,1,1)$}
\label{fig:combtype}
  \end{figure}
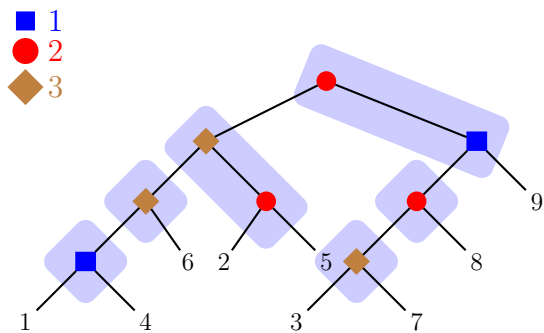

We can associate a type to  each $\Upsilon \in \nor_n$ in the following 
way: Let $\pi(\Upsilon)$ be the finest (set) partition of the set of internal nodes of 
$\Upsilon$ satisfying
\begin{itemize}
\item for every pair of internal nodes $x$ and $y$ such that $y$ is a right child of $x$, 
$x$ 
and $y$ belong to the same block of $\pi(\Upsilon)$.
\end{itemize}
We define the \emph{comb type} $\lambda(\Upsilon)$ of $\Upsilon$ to be the integer
partition whose parts are the sizes of the blocks of $\pi(\Upsilon)$.

Note that the coloring condition (\ref{equation:combcondition}) is closely related to the 
comb type of a normalized tree. The coloring 
condition 
implies that in a colored comb $\Upsilon$ there are no repeated colors in each block $B$ of 
the partition $\pi(\Upsilon)$ associated to $\Upsilon$.
So after choosing 
$|B|$ 
different colors for the 
internal nodes of $\Upsilon$ in $B$,  there is a unique way to assign the colors such 
that $\Upsilon$ is a colored comb (the colors must decrease towards the 
right in each block of $\pi(\Upsilon)$). 
In Figure \ref{fig:combtype} this relation is illustrated.

For a colored comb $C$ denote $\mu(C)$ the sequence of nonnegative integers such that 
$$\mu(C)(j):=| \{x \text{ a internal node in $C$ }\mid\, \clr(x)=j\}|.$$ Let 
$\xx^{\mu}:=x_1^{\mu(1)}x_2^{\mu(2)}\cdots$ and

$$F_{\mcomb_n}(\xx):= \sum_{C \in \mcomb_n}\xx^{\mu(C)}.$$

The following theorem is a consequence of the definition of a colored comb, the definition of 
the symmetric functions $e_i(\xx)$ and the observations 
above (see \cite{Dleon2014}).

\begin{theorem}[\cite{Dleon2014}]\label{theorem:epositivity} For $n \ge 1$
  $$F_{\mcomb_n}(\xx)=\sum_{T \in \nor_n}e_{\lambda(T)}(\xx).$$
\end{theorem}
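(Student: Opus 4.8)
The plan is to establish the identity $F_{\mcomb_n}(\xx)=\sum_{T \in \nor_n}e_{\lambda(T)}(\xx)$ by reorganizing the defining sum for $F_{\mcomb_n}(\xx)$ according to the underlying uncolored normalized tree. Every colored comb $C$ has an underlying normalized tree $\widetilde C \in \nor_n$ obtained by forgetting the coloring, so I would first partition the set $\mcomb_n$ into fibers indexed by $\nor_n$ and write
\begin{align*}
F_{\mcomb_n}(\xx)=\sum_{C \in \mcomb_n}\xx^{\mu(C)}=\sum_{T \in \nor_n}\ \sum_{\substack{C \in \mcomb_n\\ \widetilde C = T}}\xx^{\mu(C)}.
\end{align*}
The task then reduces to showing that for a fixed $T \in \nor_n$ the inner fiber sum equals $e_{\lambda(T)}(\xx)$.

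The heart of the argument is the structural observation the paper has already set up: the coloring condition (\ref{equation:combcondition}) forces the colors to strictly decrease along each maximal right-child chain, i.e., within each block $B$ of the partition $\pi(T)$. Thus specifying a valid coloring of the nodes of $T$ lying in a block $B$ is exactly the same as choosing a set of $|B|$ distinct positive integers and then assigning them in the unique order-decreasing way along that block. I would make this precise by noting that the colors assigned within distinct blocks are completely independent of one another (there is no coloring constraint relating nodes in different blocks), so a valid global coloring of $T$ is precisely an independent choice of a $|B|$-element subset of $\PP$ for each block $B \in \pi(T)$. Tracking the monomial weight, a choice of the distinct colors $i_1<i_2<\cdots<i_{|B|}$ for a single block $B$ contributes the monomial $x_{i_1}x_{i_2}\cdots x_{i_{|B|}}$, and summing over all such subsets gives exactly $e_{|B|}(\xx)$ by the definition of the elementary symmetric function.

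Combining the contributions across blocks, the inner fiber sum factors as a product over the blocks of $\pi(T)$:
\begin{align*}
\sum_{\substack{C \in \mcomb_n\\ \widetilde C = T}}\xx^{\mu(C)}=\prod_{B \in \pi(T)} e_{|B|}(\xx)=\prod_{i} e_{\lambda(T)_i}(\xx)=e_{\lambda(T)}(\xx),
\end{align*}
where the second equality is the definition of the comb type $\lambda(T)$ as the partition recording the block sizes, and the last is the definition of $e_{\lambda(T)}$. Substituting this back into the fiber decomposition yields the theorem.

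The only genuinely delicate point, and the step I expect to require the most care, is verifying that the colorings on different blocks are truly independent and that the decreasing condition along a single block is both necessary and sufficient for the full coloring condition (\ref{equation:combcondition}). Concretely, I must confirm that condition (\ref{equation:combcondition}) only ever relates a node to its right child, hence only ever constrains pairs of nodes inside the same block of $\pi(T)$, and imposes no cross-block relations; and that, conversely, any independent choice of strictly decreasing labelings along the blocks does satisfy (\ref{equation:combcondition}) at every internal node whose right child is not a leaf. This is precisely the content of the remark preceding the theorem (that within each block there is a unique order-respecting assignment once the set of colors is chosen), so once that local-to-global decoupling is nailed down, the product factorization and the identification with $e_{\lambda(T)}$ follow immediately from the definitions.
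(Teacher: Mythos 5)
Your proposal is correct and follows essentially the same route the paper indicates: the paper states the theorem as a direct consequence of the observation that within each block of $\pi(\Upsilon)$ the colors must be distinct and are then uniquely determined by the chosen set, which is exactly the fiber-by-fiber, block-by-block factorization you carry out. Your verification that condition (\ref{equation:combcondition}) only constrains pairs within a single block (each block being a maximal right-child chain) is the right point to nail down, and it fills in the detail the paper leaves to the reader.
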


Note that $F_{\mcomb_n}(1,t,0,0,\dots)=\sum_{C\in \comb_n}t^{\red{C}}=T_n(t)$ and so Theorem 
\ref{theorem:epositivity} is a generalization of Theorem 
\ref{theorem:gammacoefficientsrightdescents}.

\begin{remark}
 Versions of Theorem \ref{theorem:epositivity} can also be given in terms of a completely different 
type on the set $\nor_n$ corresponding to a family of multicolored Lyndon trees and also in terms 
of colored Stirling permutations, see \cite{Dleon2014}.
\end{remark}

\bibliographystyle{abbrv}
\bibliography{gammapolynomial}

\begin{thebibliography}{10}

\bibitem{Branden2004}
P.~Br{\"a}nd{\'e}n.
\newblock {Sign-graded posets, unimodality of {$W$}-polynomials and the
  {C}harney-{D}avis conjecture}.
\newblock {\em Electron. J. Combin.}, 11(2):Research Paper 9, 15 pp.
  (electronic), 2004/06.

\bibitem{Branden2008}
P.~Br{\"a}nd{\'e}n.
\newblock {Actions on permutations and unimodality of descent polynomials}.
\newblock {\em European J. Combin.}, 29(2):514--531, 2008.

\bibitem{Branden2014}
P.~{Br{\"a}nd{\'e}n}.
\newblock {Unimodality, log-concavity, real-rootedness and beyond}.
\newblock {\em ArXiv preprint arXiv:1410.6601}, 2014.

\bibitem{DotsenkoKhoroshkin2007}
V.~V. Dotsenko and A.~S. Khoroshkin.
\newblock {Character formulas for the operad of a pair of compatible brackets
  and for the bi-{H}amiltonian operad}.
\newblock {\em Funktsional. Anal. i Prilozhen.}, 41(1):1--22, 96, 2007.

\bibitem{Drake2008}
B.~Drake.
\newblock {\em {An inversion theorem for labeled trees and some limits of areas
  under lattice paths}}.
\newblock ProQuest LLC, Ann Arbor, MI, 2008.
\newblock Thesis (Ph.D.)--Brandeis University.

\bibitem{Gal2005}
{\'S}.~R. Gal.
\newblock {Real root conjecture fails for five- and higher-dimensional
  spheres}.
\newblock {\em Discrete Comput. Geom.}, 34(2):269--284, 2005.

\bibitem{StanleyGessel1978}
I.~Gessel and R.~P. Stanley.
\newblock {Stirling polynomials}.
\newblock {\em J. Combinatorial Theory Ser. A}, 24(1):24--33, 1978.

\bibitem{Dleon2014}
R.~S. {Gonz{\'a}lez D'Le{\'o}n}.
\newblock {On the free Lie algebra with multiple brackets}.
\newblock {\em arXiv preprint arXiv:1408.5415}, 2014.

\bibitem{DleonWachs2013}
R.~S. {Gonz{\'a}lez D'Le{\'o}n} and M.~L. Wachs.
\newblock {On the (co) homology of the poset of weighted partitions}.
\newblock {\em arXiv preprint arXiv:1309.5527}, 2013.

\bibitem{Liu2010}
F.~Liu.
\newblock {Combinatorial bases for multilinear parts of free algebras with two
  compatible brackets}.
\newblock {\em J. Algebra}, 323(1):132--166, 2010.

\bibitem{Macdonald1995}
I.~G. Macdonald.
\newblock {\em {Symmetric functions and {H}all polynomials}}.
\newblock {Oxford Mathematical Monographs}. The Clarendon Press, Oxford
  University Press, New York, second edition, 1995.
\newblock With contributions by A. Zelevinsky, Oxford Science Publications.

\bibitem{PostnikovReinerWilliams2008}
A.~Postnikov, V.~Reiner, and L.~Williams.
\newblock {Faces of generalized permutohedra}.
\newblock {\em Doc. Math.}, 13:207--273, 2008.

\bibitem{ShapiroWoanGetu1983}
L.~W. Shapiro, W.~J. Woan, and S.~Getu.
\newblock {Runs, slides and moments}.
\newblock {\em SIAM J. Algebraic Discrete Methods}, 4(4):459--466, 1983.

\bibitem{Stanley1999}
R.~P. Stanley.
\newblock {\em {Enumerative combinatorics. {V}ol. 2}}, volume~62 of {\em
  {Cambridge Studies in Advanced Mathematics}}.
\newblock Cambridge University Press, Cambridge, 1999.
\newblock With a foreword by Gian-Carlo Rota and appendix 1 by Sergey Fomin.

\bibitem{Stanley2012}
R.~P. Stanley.
\newblock {\em {Enumerative combinatorics. {V}olume 1}}, volume~49 of {\em
  {Cambridge Studies in Advanced Mathematics}}.
\newblock Cambridge University Press, Cambridge, second edition, 2012.

\bibitem{Wachs1998}
M.~L. Wachs.
\newblock {On the (co)homology of the partition lattice and the free {L}ie
  algebra}.
\newblock {\em Discrete Math.}, 193(1-3):287--319, 1998.
\newblock Selected papers in honor of Adriano Garsia (Taormina, 1994).

\end{thebibliography}

\end{document}